\documentclass[a4paper,11pt]{amsart}

\usepackage{amsmath} 
\usepackage{amssymb} 
\usepackage{amsthm} 
\usepackage{mathrsfs} 
\usepackage{mathtools} 
\usepackage{aliascnt} 
\usepackage{float} 
\usepackage{tikz} 
\usepackage{pgfplots} 
\usepackage{hyperref} 
\usepackage{nameref} 
\usepackage{todonotes}
\usepackage{xspace} 
\usepackage[ 
    nameinlink 
]{cleveref} 
\usepackage{autonum} 
\usepackage{titlesec} 
\usepackage{microtype}
\usepackage{enumitem}

\titlelabel{\thetitle.\quad}

\hypersetup{
    colorlinks=true,
    linkcolor=blue,
    citecolor=orange,
    filecolor=magenta,      
    urlcolor=cyan,
    }
\pgfplotsset{width=8cm,compat=1.9}

\newtheorem{Theorem}{Theorem}[section]

\newaliascnt{Lemma}{Theorem}

\aliascntresetthe{Lemma}

\newaliascnt{Proposition}{Theorem}
\newtheorem{Proposition}[Proposition]{Proposition}
\aliascntresetthe{Proposition}

\newaliascnt{Corollary}{Theorem}
\newtheorem{Corollary}[Corollary]{Corollary}
\aliascntresetthe{Corollary}

\newaliascnt{Definition}{Theorem}
\newtheorem{Definition}[Definition]{Definition}
\aliascntresetthe{Definition}

\newaliascnt{Remark}{Theorem}
\newtheorem{Remark}[Remark]{Remark}
\aliascntresetthe{Remark}

\newaliascnt{Example}{Theorem}
\newtheorem{Example}[Example]{Example}
\aliascntresetthe{Example}

\crefname{Theorem}{Theorem}{Theorems}
\Crefname{Theorem}{Theorem}{Theorems}

\crefname{Lemma}{Lemma}{Lemmas}
\Crefname{Lemma}{Lemma}{Lemmas}

\crefname{Proposition}{Proposition}{Propositions}
\Crefname{Proposition}{Proposition}{Propositions}

\crefname{Corollary}{Corollary}{Corollaries}
\Crefname{Corollary}{Corollary}{Corollaries}

\crefname{Definition}{Definition}{Definitions}
\Crefname{Definition}{Definition}{Definitions}

\crefname{Remark}{Remark}{Remarks}
\Crefname{Remark}{Remark}{Remarks}

\crefname{Example}{Example}{Examples}
\Crefname{Example}{Example}{Examples}

\calclayout

\newcommand{\RR}{\mathbb{R}}
\newcommand{\NN}{\mathbb{N}}
\newcommand{\cont}{\mathcal{C}}

\usepackage{scalerel}[2014/03/10]
\usepackage[usestackEOL]{stackengine}

\def\dashint{\,\ThisStyle{\ensurestackMath{%
  \stackinset{c}{.2\LMpt}{c}{.5\LMpt}{\SavedStyle-}{\SavedStyle\phantom{\int}}}%
  \setbox0=\hbox{$\SavedStyle\int\,$}\kern-\wd0}\int}
\def\ddashint{\,\ThisStyle{\ensurestackMath{%
  \stackinset{c}{.2\LMpt}{c}{.5\LMpt+.2\LMex}{\SavedStyle-}{%
    \stackinset{c}{.2\LMpt}{c}{.5\LMpt-.2\LMex}{\SavedStyle-}{%
      \SavedStyle\phantom{\int}}}}\setbox0=\hbox{$\SavedStyle\int\,$}\kern-\wd0}\int}

\makeatletter
\renewcommand{\l@section}{\@tocline{1}{0pt}{1.5pc}{3pc}{}}
\makeatother

\begin{document}
\title[Viscosity Framework for Dynamic Programming Principles]{\textbf{A Viscosity Framework for Dynamic Programming Principles and  Applications}}
\date{}

\author[F.~del~Teso]{F\'elix del Teso}

\address[F. del Teso]{Departamento de Matematicas, Universidad Aut\'onoma de Madrid (UAM), Campus de Cantoblanco, 28049 Madrid, Spain} 
\email[]{felix.delteso\@@{}uam.es}

\urladdr{https://sites.google.com/view/felixdelteso}

\keywords{Mean value formulas, asymptotic expansions, viscosity solutions}

\author[J. D. Rossi]{Julio D. Rossi}
\address[J. D. Rossi]{Departamento de Matem{\'a}tica y Estadistica, 
        Universidad Torcuato di Tella,
         Av. Figueroa Alcorta 7350, (1428),
        Buenos Aires, Argentina.}
\email{julio.rossi@utdt.edu}

\author[J. Ruiz-Cases]{Jorge Ruiz-Cases}
\address[J.\,Ruiz-Cases]{Departamento de Matem\'{a}ticas, Universidad Aut\'onoma de Madrid,
    \& Instituto de Ciencias Matem\'{a}ticas ICMAT (CSIC-UAM-UCM-UC3M),
    28049-Madrid, Spain.}
\email{jorge.ruizc@uam.es}
\urladdr{https://sites.google.com/view/jorgeruizcases}

      \keywords{Viscosity solutions, Approximations, Mean value formulas, Asymptotic expansions. }
      
\subjclass[2020]{
 35J60, 
 35D40, 
 35B05, 
 49L20. 
 }

\begin{abstract}
In this work we introduce a viscosity-based notion of solution for general approximation schemes associated with partial differential equations, such as dynamic programming principles~(DPPs). A key feature of our approach is that it bypasses any measurability requirement on solutions of the DPP, an assumption that is often difficult to verify and may even fail in relevant examples. We establish a comparison principle between classical strict supersolutions and viscosity subsolutions of the DPP, which yields stability results under minimal and natural hypotheses. As a consequence, we prove existence of viscosity solutions of the DPP and their convergence to viscosity solutions of a PDE that is consistent with the underlying approximation scheme. Moreover, we show that solutions of the limiting PDE admit an asymptotic expansion encoded by the approximation operator. Finally, we demonstrate that a broad class of local, nonlocal, and nonlinear partial differential equations fits into our framework, recovering known examples in the literature and completing gaps in the existing literature.
\color{purple}

\normalcolor
\end{abstract}

\maketitle

\tableofcontents

\section{Introduction} \label{se:introduction}

We consider a family of operators~$\mathfrak{a}_\rho$, typically of average-type and possibly arising from an approximation of a differential operator. A classical example is provided by the asymptotic expansion of the Laplacian: for any smooth function~$\varphi$,
\[
-\Delta \varphi(x)
= \frac{2(N+2)}{\rho}\left(\varphi(x)-\mathfrak{a}_\rho(x,\varphi)\right) + o_\rho(1), \quad \textup{where} \quad \mathfrak{a}_\rho(x,\varphi)=\dashint_{B_{\sqrt{\rho}}(x)} \varphi(y)\,dy.
\]
The operator~$\mathcal{A}_\rho$  defined by
\begin{equation}
    \mathcal{A}_\rho(x,\varphi,\varphi(x)) \coloneqq \frac{2(N+2)}{\rho}\left(\varphi(x)-\mathfrak{a}_\rho(x,\varphi)\right)
\end{equation}
is usually referred in the literature as a \emph{consistent} approximation of~$-\Delta$ in the sense that
\[
-\Delta \varphi(x) = \mathcal{A}_\rho(x,\varphi,\varphi(x)+o(\rho)).
\]
More generally, let~$F$ be a second--order elliptic operator and assume that~$\mathcal{A}_\rho$ provides a consistent approximation of~$F$ in the sense that for every smooth test function~$\varphi$,
\[
F\bigl(x,\varphi(x),\nabla \varphi(x),D^2\varphi(x)\bigr)
= \mathcal{A}_\rho(x,\varphi,\varphi(x)+o(\rho)) ,
\quad \textup{with} \quad
\mathcal{A}_\rho(x,\varphi,\varphi(x))
= \frac{c}{\rho}\left(\varphi(x)-\mathfrak{a}_\rho(x,\varphi)\right).
\]

It has been systematically observed in the literature that the aforementioned consistency property closely relates the following two statements:

\begin{enumerate}[
  label=(\Roman*),
  leftmargin=0pt,
  labelindent=0pt,
  labelwidth=!,
  align=left
]
\item\label{item2} \emph{Approximation schemes.}
The family of solutions~$\{u_\rho\}_{\rho>0}$ of
\begin{equation}\label{eq:DPPintro}
\begin{cases}
\mathcal{A}_\rho(x,u_\rho,u_\rho(x))=0, & x \in \Omega,\\
u_\rho(x)=g(x), & x \in \mathbb{R}^N\setminus\Omega,
\end{cases}
\quad
\left(\textup{or,}\quad
\begin{cases}
u_\rho(x)=\mathfrak{a}_\rho(x,u_\rho), & x \in \Omega,\\
u_\rho(x)=g(x), & x \in \mathbb{R}^N\setminus\Omega,
\end{cases}\right)
\end{equation}
converges uniformly, as~$\rho\to0^+$, to the unique viscosity solution of
\begin{equation}\label{eq:BVPintro}
\begin{cases}
F\bigl(x,u(x),\nabla u(x),D^2u(x)\bigr)=0, & x \in \Omega,\\
u(x)=g(x), & x \in \mathbb{R}^N\setminus\Omega.
\end{cases}
\end{equation}
Here we put a greater emphasis on solutions of the approximation scheme, and how those are related to solutions of the equation. This property is typically known as ``convergence'' in the context of numerical analysis.

\item\label{item1} \emph{Asymptotic mean value property.}
A function~$u$ is a solution of
\begin{equation}\label{eq:PDEintro}
F\bigl(x,u(x),\nabla u(x),D^2u(x)\bigr)=0 \quad \textup{in } \Omega
\end{equation}
if and only if it satisfies the asymptotic mean value formula,
\begin{equation}\label{eq:AMVPintro}
\mathcal{A}_\rho(x,u,u(x)+o(\rho))=0,
\quad
\left(\textup{or,}\quad
u(x)=\mathfrak{a}_\rho(x,u)+o(\rho)\right),
\qquad \textup{as} \quad \rho\to0^+ \quad \textup{for all} \quad x\in\Omega.
\end{equation}
Solutions to both the PDE and the asymptotic mean value formula are understood in the viscosity sense. Here the emphasis is on the solutions the PDE, and how they are almost solutions to the approximation scheme. This property is sometimes referred as ``local truncation error'' in the context of numerical analysis.

\end{enumerate}

\begin{Remark}
    Problem~\eqref{eq:DPPintro} provides a unifying framework for game theory, optimal control and numerical analysis. In game theory, they naturally arise as an intermediate step between the game and the PDEs, as they characterize the value of the game. In optimal control, they encode optimal decision-making over time. And in numerical analysis, DPPs work as approximation schemes that converge to the continuous  solution of the PDE. Following the terminology used in some of these areas, we will refer to problem~\eqref{eq:DPPintro} as \emph{Dynamic Programming Principle} (DPP).
\end{Remark}

\medskip

\noindent \textbf{Relevance of problems {\normalfont~\ref{item2}} and {\normalfont~\ref{item1}}.} 
The relevance of both problems~\ref{item2} and~\ref{item1} is well established; nevertheless, we briefly recall them for the reader’s convenience. Concerning problem~\ref{item2}, it is often significantly easier to analyze solutions of the DPP than to deal directly with solutions of the corresponding PDE. By passing to the limit in the approximation scheme, solutions of the DPP can be used not only to establish existence results for the PDE, but also to infer qualitative properties that may be considerably more difficult to obtain through purely PDE-based arguments.  

Regarding problem~\ref{item1}, characterizing solutions of the PDE through an asymptotic mean value property often provides valuable structural insight into the nature of the solutions. Among other consequences, since the operator~$\mathcal{A}_\rho$ typically does not impose differentiability requirements, this characterization allows one to extend the notion of solution of the PDE to more general functional settings, where classical differentiability may not be available.

The present paper has \textbf{four main goals} related to problems~\ref{item2} and~\ref{item1}.

\medskip
\noindent\textbf{Objective 1.}
One of the main challenges in the study of dynamic programming principles is to prove the existence of solutions to~\eqref{eq:DPPintro}. A typical approach consists in looking for a fixed point of the operator
\[
T[u](x) \coloneqq \mathfrak{a}_\rho(x,u).
\]
For this purpose, one needs to identify a suitable functional space~$\mathcal{X}$ such that~$T\colon \mathcal{X}\to\mathcal{X}$. Since the operator~$T$ usually involves some form of integral averaging, the space~$\mathcal{X}$ is often required to be contained in the set of measurable functions. However, this requirement may fail to hold even in relatively simple examples. Indeed, there exist DPPs (associated with meaningful PDEs) for which no existence theory of classical solutions is available precisely due to measurability issues.

A simple representative case is given by the averaging operator
\begin{equation}\label{eq:nonmeaseaxample}
\mathfrak{a}_\rho(x,u)
= \alpha \dashint_{B_{\sqrt{\rho}}(x)} u(y)\,dy
+ (1-\alpha) \sup_{y\in \overline{B}_\rho(x)} u(y),
\end{equation}
with~$\alpha\in(0,1)$, which arises naturally in the study of the equation~$-\Delta u - |\nabla u| = 0$.
While the integral term clearly maps measurable functions into measurable functions, it was shown in~\cite{LPS} that the supremum term
\(
\sup_{y\in \overline{B}_\rho(x)} u(y)
\)
may fail to be Borel measurable, even when~$u$ itself is Borel measurable. 

Another prominent and more recent example is provided by the averaging operator associated to the infinity fractional Laplacian, see~\cite{[BCFinfinity]}. There, the approximation operator found in~\cite{delTesoEndalLewicka} involves directional suprema and infima along one-dimensional integrals on rays.
The question of existence of solutions to the DPP is open since it is not known whether this averaging operator maps measurable functions to measurable functions. Using the theory developed in this paper, we solve the existence problem in~\Cref{se:examples}, Example~\ref{ej:inflapl}.

The present work is motivated by the observation that these measurability requirements constitute a genuine obstruction, rather than a merely technical inconvenience. Avoiding such complications is a main goal of this paper. To overcome this difficulty, we introduce a notion of \emph{viscosity solution} to the DPP~\eqref{eq:DPPintro} that completely avoids measurability issues. Even in situations where measurability can be guaranteed, verifying it may require delicate and technically involved arguments.

An additional motivation for introducing a viscosity-type notion of solutions for the DPP is that it completes the viscosity framework underlying problems~\ref{item2} and~\ref{item1}. Indeed, the boundary value problem~\eqref{eq:BVPintro} appearing in problem~\ref{item2}, as well as the PDE~\eqref{eq:PDEintro}, and the asymptotic mean value formula~\eqref{eq:AMVPintro} in problem~\ref{item1} are all interpreted in the viscosity sense. It is therefore natural and conceptually consistent to extend the notion of viscosity solutions to the DPP~\eqref{eq:DPPintro} as well.

Our new notion of solution for~\eqref{eq:DPPintro} is inspired by the classical theory of viscosity solutions for elliptic PDEs: sub- and supersolutions are defined using smooth test functions, and a solution is understood as a function that is both a subsolution and a supersolution.

The main strengths of this approach can be summarized as follows:
\begin{itemize}
\item It eliminates the need to verify measurability (or any other structural property of the solution space) altogether.
\item It is naturally suited to Perron-type arguments and is designed to guarantee existence of solutions under minimal assumptions on~$\mathcal{A}_\rho$ (or, equivalently, on~$\mathfrak{a}_\rho$).
\item Just as in the PDE case, there is an easy-to-check comparison result, under minimal hypotheses, between a classical strict supersolution and a viscosity subsolution (and vice versa) of the DPP.
\end{itemize}

The present work can be regarded as a follow-up to~\cite{LiuSchikorra}. There, the authors also provide a systematic approach to solving explicit DPPs in the classical sense. However, their approach relies on the hypothesis that the pointwise supremum of \emph{nice} subsolutions remains \emph{nice}, where \emph{nice} could mean, for instance, \emph{measurable}. We consider this to be a very strong assumption, which our method completely bypasses.

\medskip

\noindent\textbf{Objective 2.} One of the hypotheses required both for the previous existence result for the DPP and for the convergence result toward solutions of the PDE (Objective 3) is that of \emph{stability}: namely, we require the existence of an upper bound for any viscosity subsolution (resp.\@ a lower bound for any viscosity supersolution)~$u_\rho$ of the DPP.

In the context of the DPP, these bounds may depend on~$\rho$; however, in order to obtain convergence to the PDE, they must be uniform with respect to~$\rho$. We address this issue by means of the comparison result for the DPP mentioned above, under the assumption of the existence of a classical strict supersolution (resp.\@ subsolution).

In order to check stability properties of the DPP, we provide a systematic approach to constructing such strict supersolutions, based on smooth solutions of the limiting PDE together with a consistency property of the scheme. This technique is useful to establish the hypotheses of both Objective~1 and also in the forthcoming Objective~3.

\normalcolor

\medskip

\noindent\textbf{Objective 3.} 
Here, our aim is to study the convergence of viscosity solutions~$u_\rho$ of the DPP toward solutions of the PDE. The tools developed above allow us to recover property~\ref{item2} within the framework of viscosity solutions of the DPP~\eqref{eq:DPPintro}, under weaker assumptions than those traditionally imposed.

In particular, this result can be viewed as a refinement of the classical work of Barles and Souganidis~\cite{BarlesSouganidis}, where the existence of classical solutions of~\eqref{eq:DPPintro} was required in order to guarantee convergence of the approximation scheme. Moreover, by exploiting the method described in Objective~2, we are able to dispense with the stability assumption altogether.

\medskip

\noindent\textbf{Objective 4.} Our final goal is to present a general and systematic treatment of the equivalence between being a solution to the PDE
\begin{equation}
F\bigl(x,u(x),\nabla u(x),D^2u(x)\bigr)=0,
\end{equation}
and being a solution to the asymptotic mean value property
\begin{equation} \label{eq:intuitivemeanvalue}
    \mathcal{A}_\rho(x,u,u(x)+ o(\rho))=0, 
    \quad \left( \textup{or} \quad u(x)=\mathfrak{a}_\rho(x,u)-o(\rho)\right),
    \quad \textup{as } \rho\to0^+,
\end{equation}
both notions being understood in the viscosity sense. We define viscosity solutions for the asymptotic mean value formula giving a rigorous interpretation of formula~\eqref{eq:intuitivemeanvalue} in the weakest natural way.

Adopting this interpretation of viscosity solutions for the mean value formula allows us to prove that the following consistency condition
\begin{equation} \label{eq:consistencia}
    \lim_{\rho\to0^+} \mathcal{A}_\rho(x,\varphi,\varphi(x)+ o(\rho))
    = F(x, \varphi(x), \nabla \varphi(x), D^2\varphi(x))
    \quad \textup{for all } \varphi \in C_b^\infty(\RR^N).
\end{equation}
In fact a weaker version of it, see~\Cref{teo.asympt.intro}, already yields the equivalence between viscosity solutions of the PDE and viscosity solutions of the asymptotic mean value formula, essentially by definition.

\subsection{Comments on related literature.}

Mean value formulas for linear operators beyond the Laplacian have been studied in the literature. 
Littman et al.~\cite{Littman.et.al1963} proved a mean value theorem for linear divergence-form operators with bounded measurable coefficients. 
Caffarelli~\cite{Caf98} (see also~\cite{BH2015,Caffarelli.Roquejoffre.2007}) highlighted a related result in terms of mean value sets.

They are also widely known for nonlinear operators. The most classical example in this setting is the normalized (also called~$1$-homogeneous or game-theoretic)~$p$-Laplacian, see~\cite{AsymptoticMeanValuePharmonic,PSSW2}. For historical context and further reading, we refer to the works 
\cite{AHP,ArroyoLLorente, AsymptoticMeanValueFormulas, [Blanc et al. 2020], BChMR, BS, CIMW, delTesoLindgrenMeanValue, DelTeso-Manfredi.Parviainen, DTR, DMRS, FLM, GMRR, Hartikainen, AnotherapproachPLaplacian,  KohnSerfatyCurvature, KohnSerfatyFullyNonlinear, LM, LiMa, LiuSchikorra, LPS, AsymptoticMeanValueTugofwar, AsymptoticMeanValuePharmonic}, 
as well as to the recently published books~\cite{[Blanc and Rossi 2019], Lew}. These works cover a wide range of nonlinear equations, including Monge–Amp\`ere equations,~$k$-Hessians, Bellman equations, geometric flows, and more, for which mean value formulas satisfying properties~\ref{item2} and~\ref{item1} are available.

It is also worth mentioning that there are cases in which the asymptotic mean value formula~\eqref{eq:AMVPintro} (and also others) holds pointwise. Examples can be found in~\cite{ArroyoLLorente, delTesoLindgrenMeanValue, LiMa}.
\normalcolor

\subsection{Structure of the paper.}
In~\Cref{se:defandres}, we introduce the main definitions and state our main results. 
\Cref{se:existencia} is devoted to the proof of existence of viscosity solutions to the DPP. 
In~\Cref{se:comparison}, we establish a comparison principle and discuss its role within the overall framework.~\Cref{se:BS} addresses the convergence of solutions of the DPP to solutions of the limiting PDE. 
In~\Cref{se:asymptotic}, we prove the equivalence between solutions of the asymptotic mean value formula and solutions of the PDE. 
Finally,~\Cref{se:extensions} presents several possible extensions of the theory, while~\Cref{se:examples} illustrates the scope of our results by showing that our theory covers and extends most of the known examples in the literature.

\section{Definitions and main results} \label{se:defandres}

 Consider~$\overline{\mathcal{X}}$ to be set of \emph{bounded} functions from~$\RR^N$ to~$\RR$, that is
\[
\overline{\mathcal{X}} \coloneqq \{f: \RR^N \to \RR \text{ such that } \sup_{x \in \Omega} \{|f(x)|\} < \infty \},
\]
and let~$\mathcal{X}$ be a fixed subset of~$\overline{\mathcal{X}}$. 

\begin{Remark}
Throughout the paper, and in view of the previous examples, one may think of~$\mathcal{X}$ as the set of \emph{bounded measurable} functions from~$\RR^N$ to~$\RR$. Moreover, the extra inclusion~$\cont_b^\infty(\Omega) \subset \mathcal{X}$ will sometimes be required. For example, it will be useful when dealing with test functions in the study of the convergence of DPP solutions towards the solution of the associated PDE.
\end{Remark}

\begin{Remark} \label{obs:RN}
Using~$\RR^N$ in the definitions of~$\mathcal{X}$ and~$\overline{\mathcal{X}}$ plays no role in when discussing a viscosity solution of the DPPs, and could be easily change to any other set. However, we keep~$\RR^N$ for simplicity.
\end{Remark}

Given a function~$\mathcal{A}: \RR^N \times \mathcal{X} \times \RR \to \RR$ we consider the associated implicit DPP through the following definition.
\begin{Definition}
    Let~$\mathcal{A}: \Omega \times \mathcal{X} \times \RR \to \RR$. We say that~$u\in \mathcal{X}$ is a \emph{classical solution} of associated implicit DPP inside an open domain~$\Omega$ if
   ~$$
    \mathcal{A}(x, u, u(x)) = 0\quad \textup{for all} \quad x\in\Omega.
   ~$$
\end{Definition}

\begin{Remark}
The terminology ``implicit'' in the previous definition is used to emphasize that, in general, it may not be possible to identify an explicit averaging operator~$\mathfrak{a}$ such that~$u(x) = \mathfrak{a}(x,u)$.
This is the case, for instance, for the implicit DPP introduced in~\cite{delTesoLindgrenMeanValue}, which is valid for the~$p$-Laplacian operator~$\mathrm{div}\bigl(|\nabla u|^{p-2}\nabla u\bigr)$,
and is given by
\begin{equation}
\mathcal{A}_\rho(x,u,u(x))
=
\frac{C}{\rho^p}
\dashint_{B_\rho}
|u(x+y)-u(x)|^{p-2}\bigl(u(x+y)-u(x)\bigr)\, dy
\end{equation}
for some constant~$C>0$ depending only on~$N$ and~$p$. 
\end{Remark}

The existence of an associated averaging operator~$\mathfrak{a}$ can nevertheless be guaranteed under very mild assumptions on~$\mathcal{A}$, which are the following:
\begin{enumerate}[label={(\alph*)}]
    \item \label{asA-it:a}
   ~$\mathcal{A}$ is nonincreasing in the second variable; that is, for every~$(x,s)\in \Omega \times\RR$ and every~$\varphi_1,\varphi_2\in \mathcal{X}$ such that~$\varphi_1\leq \varphi_2$ in~$\RR^N$, we have
    \[
    \mathcal{A}(x, \varphi_2, s)\leq \mathcal{A}(x, \varphi_1, s).
    \]

    \item \label{asA-it:abis}
   ~$\mathcal{A}$ is nondecreasing in the third variable; that is, for every~$(x,\varphi) \in \Omega \times \mathcal{X}$ and every~$s_1,s_2\in \RR$ such that~$s_1\leq s_2$, we have
    \[
    \mathcal{A}(x, \varphi, s_1)\leq \mathcal{A}(x, \varphi, s_2).
    \]

    \item \label{asA-it:b}
    For every~$(x,\varphi) \in \RR^N \times \mathcal{X}$, the function~$\theta\colon \RR \to \RR$ defined by~$\theta(s) \coloneqq \mathcal{A}(x,\varphi,s)$
    admits a unique zero; that is, there exists a unique~$s_0 \in \RR$ such that~$\theta(s_0)=0$.
\end{enumerate}

\begin{Remark}
If in assumption~\textup{\ref{asA-it:abis}} we additionally assume that~$\mathcal{A}$ is \emph{strictly} increasing in the third variable, then in~\textup{\ref{asA-it:b}} it would be sufficient to require existence of a solution~$s$ to~$\mathcal{A}(x,\varphi,s)=0$,
since uniqueness would then follow from~\textup{\ref{asA-it:abis}}.

\end{Remark}

\begin{Remark}\label{rem:defAa}
Observe that, thanks to assumption~\textup{\ref{asA-it:b}} on~$\mathcal{A}$, we can define~$\mathfrak{a} \colon \RR^N \times \mathcal{X} \to \RR$
by
\begin{equation}
    \mathfrak{a}(x,\varphi) := s_{x,\varphi},
\end{equation}
where~$s_{x,\varphi}$ is the unique value such that~$\mathcal{A}(x,\varphi,s_{x,\varphi}) = 0$. This gives rise to an equivalent ``explicit'' DPP, namely,
\[
\mathcal{A}(x,u,u(x)) = 0
\quad \Longleftrightarrow \quad
u(x) = \mathfrak{a}(x,u).
\]

Note that, thanks to the monotonicity assumptions~\textup{\ref{asA-it:a}} and~\textup{\ref{asA-it:abis}} on~$\mathcal{A}$, the operator~$\mathfrak{a}$ is nondecreasing in its second variable.  
Indeed, if~$\varphi_1 \leq \varphi_2$, then
\[
\mathcal{A}(x,\varphi_2,s_{x,\varphi_1})
\leq
\mathcal{A}(x,\varphi_1,s_{x,\varphi_1})
=
0
=
\mathcal{A}(x,\varphi_2,s_{x,\varphi_2}),
\]
and by the monotonicity of~$\mathcal{A}$ with respect to the third variable we conclude that
\[
\mathfrak{a}(x,\varphi_1)
=
s_{x,\varphi_1}
\leq
s_{x,\varphi_2}
=
\mathfrak{a}(x,\varphi_2).
\]
\end{Remark}

\begin{Remark}
Conversely, if a mapping~$\mathfrak{a} \colon \Omega \times \mathcal{X} \to \RR$ that is nondecreasing in the second variable is given, we can define
\[
\mathcal{A} \colon \Omega \times \mathcal{X} \times \RR \to \RR
\]
by
\begin{equation}
    \mathcal{A}(x,\varphi,s) := s - \mathfrak{a}(x,\varphi).
\end{equation}
Observe that~$\mathcal{A}$ trivially satisfies assumptions~\textup{\ref{asA-it:a}--\ref{asA-it:abis}--\ref{asA-it:b}}. However, this is not the only way to define~$\mathcal{A}$ from~$\mathfrak{a}$, and sometimes there may be other convenient choices, depending on the consistency assumptions. We postpone this discussion to the end of~\Cref{se:BS}.
\end{Remark}

We can now define classical solutions to the following DPP with boundary values:
\begin{equation}\label{eq:BVPDPP}
\begin{cases}
\mathcal{A}(x,u,u(x)) = 0, & x \in \Omega, \\
u(x) = g(x), & x \in \RR^N \setminus \Omega,
\end{cases}
\qquad
\left(\textup{or equivalently,} \quad
\begin{cases}
u(x) = \mathfrak{a}(x,u), & x \in \Omega, \\
u(x) = g(x), & x \in \RR^N \setminus \Omega.
\end{cases}
\right)
\end{equation}

\begin{Definition}[Classical solution of the DPP with boundary values]\label{def.clasica}
Let~$\Omega \subset \RR^N$ be an open domain, let~$g:\RR^N\setminus \Omega \to \RR$, and let
$\mathcal{A}\colon \Omega \times \mathcal{X} \times \RR \to \RR$
satisfy assumptions~\textup{\ref{asA-it:a}--\ref{asA-it:abis}--\ref{asA-it:b}}. Let~$\mathfrak{a}:   \RR^N \times \mathcal{X} \to \RR$ be defined as in~\Cref{rem:defAa}.

We say that~$u \in \mathcal{X}$ is a \emph{classical supersolution} of~\eqref{eq:BVPDPP} if
$u(x) \geq g(x)$ for all ~$x \in \RR^N \setminus \Omega$,
and
\[
\mathcal{A}(x,u,u(x)) \geq 0
\quad
\bigl(\textup{equivalently, } u(x) \geq \mathfrak{a}(x,u)\bigr),
\quad \textup{for all } x \in \Omega.
\]

We say that~$u \in \mathcal{X}$ is a \emph{classical subsolution} of~\eqref{eq:BVPDPP} if~$
u(x) \leq g(x)$ for all~$x \in \RR^N \setminus \Omega$,
and
\[
\mathcal{A}(x,u,u(x)) \leq 0
\quad
\bigl(\textup{equivalently, } u(x) \leq \mathfrak{a}(x,u)\bigr),
\quad \textup{for all } x \in \Omega.
\]

Finally,~$u \in \mathcal{X}$ is called a \emph{classical solution} if it is both a classical subsolution and a classical supersolution.
\end{Definition}

Note that we assume that we have an explicit~$\mathcal{A}$ but, in general,~$\mathfrak{a}$ is not explicit, the only information we have about~$\mathfrak{a}$ is through its relation with~$\mathcal{A}$.

\begin{Remark}\label{rem:schemewithboundary}
One may regard the exterior datum~$g$ either as part of the operator~$\mathcal{A}$ or as part of the mean value operator~$\mathfrak{a}$. To this end, we define
\[
\mathcal{B}(x,\varphi,s) \coloneqq
\begin{cases}
\mathcal{A}(x,\varphi,s), & x \in \Omega, \\[1mm]
s - g(x), & x \in \RR^N \setminus \Omega,
\end{cases}
\]
or, equivalently,
\[
\mathfrak{b}(x,\varphi) \coloneqq
\begin{cases}
\mathfrak{a}(x,\varphi), & x \in \Omega, \\[1mm]
g(x), & x \in \RR^N \setminus \Omega.
\end{cases}
\]

Then,~$u \in \mathcal{X}$ is a classical supersolution of~\eqref{eq:BVPDPP} if and only if
\[
\mathcal{B}(x,u,u(x)) \geq 0
\quad
\bigl(\textup{equivalently, } u(x) \geq \mathfrak{b}(x,u)\bigr)
\quad \textup{for all } x \in \RR^N,
\]
and analogously for classical subsolutions. Thus, solving the DPP with boundary values~\eqref{eq:BVPDPP} according to~\Cref{def.clasica} is equivalent to solving~$\mathcal{B}(x,u,u(x)) = 0$ 
(or equivalently,~$u(x) = \mathfrak{b}(x,u)$)
for all ~$x \in \RR^N$.

Note that assumptions~\textup{\ref{asA-it:a}--\ref{asA-it:abis}--\ref{asA-it:b}} are also satisfied by~$\mathcal{B}$, and that the operator~$\mathfrak{b}$ remains nondecreasing in its second variable.
\end{Remark}

We now introduce a relaxation of the previous definition based on the ideas of viscosity solutions for PDEs. 
Recall that we work with the spaces~$\mathcal{X}$ and~$\overline{\mathcal{X}}$, with the inclusion
\[
\mathcal{X} \subset \overline{\mathcal{X}}.
\]
\begin{Definition}[Viscosity solutions of the DPP]\label{def:DPPviscosity}
Let~$\Omega \subset \mathbb{R}^N$ be an open domain, let~$g:\RR^N\setminus \Omega \to \RR$, and let~$
\mathcal{A}\colon \Omega \times \mathcal{X} \times \mathbb{R} \to \mathbb{R}$
satisfy assumptions~\textup{\ref{asA-it:a}--\ref{asA-it:abis}--\ref{asA-it:b}}. 
Let~$\mathfrak{a}\colon \Omega \times \mathcal{X} \to \mathbb{R}$ be defined as in~\Cref{rem:defAa}.

We say that~$u \in \overline{\mathcal{X}}$ is a \emph{viscosity supersolution} of~\eqref{eq:BVPDPP} if
$u(x) \geq g(x)$ for all ~$x \in \RR^N \setminus \Omega$,
and for all~$x\in \Omega$ we have that
\[
\mathcal{A}(x,\varphi,u(x))\geq0 \quad (\textup{or equivalently}, \quad 
u(x) \geq \mathfrak{a}(x,\varphi)),
\quad \text{for all } \varphi \in \mathcal{X} \textup{ such that } \varphi\leq u.
\]

We say that~$u \in \overline{\mathcal{X}}$ is a \emph{viscosity subsolution} of~\eqref{eq:BVPDPP} if
$u(x) \leq g(x)$ for all ~$x \in \RR^N \setminus \Omega$,
and for all~$x\in \Omega$ we have that
\[
\mathcal{A}(x,\varphi,u(x))\leq0 \quad (\textup{or equivalently}, \quad 
u(x) \leq \mathfrak{a}(x,\varphi)),
\quad \text{for all } \varphi \in \mathcal{X} \textup{ such that } \varphi\geq u.
\]

Finally, a function~$u \in \mathcal{X}$ is called a \emph{viscosity solution} if it is both a viscosity subsolution and a viscosity supersolution.
\end{Definition}

Although at first glance this may seem different from the usual definition of viscosity solutions for PDEs (see~\Cref{def:viscosityF} below), it is based on the same fundamental idea. Namely, one tests the equation against suitably ``good'' functions for which the operator is well defined, and then characterizes super- and subsolutions via comparison, taking advantage of the monotonicity of the equation.

The key point of the above definition is that, for viscosity subsolutions or supersolutions, only boundedness of~$u$ ($u\in \overline{\mathcal{X}}$) is required, and the quantity~$\mathfrak{a}(x,u)$ may not even be well defined (since~$\mathfrak{a}:\Omega \times \mathcal{X}\to \RR$), for instance due to measurability issues, as discussed above.

There is another possible definition of a viscosity notion of solution to the DPP (although later shown to be equivalent), which allows us to treat the solution in a pointwise sense. Given an operator~$\mathcal{A} \colon \Omega \times \mathcal{X} \times \RR$ (resp.\@~$\mathfrak{a} \colon \Omega \times \mathcal{X} \to \mathbb{R}$), we define the extended operators
\[
\overline{\mathcal{A}}, \underline{\mathcal{A}} \colon \Omega \times \overline{\mathcal{X}} \times \RR \qquad (\text{resp.\@ } \overline{\mathfrak{a}}, \, \underline{\mathfrak{a}} \colon \Omega \times \overline{\mathcal{X}} \to \mathbb{R})
\]
as follows:
\begin{align}
\overline{\mathcal{A}}(x,\psi,s)\coloneqq \inf_{\substack{\varphi \in \mathcal{X} \\ \varphi \leq \psi}} \mathcal{A}(x,\varphi,s), \quad (\text{resp.\@ } \overline{\mathfrak{a}}(x,\psi)
&\coloneqq
\sup_{\substack{\varphi \in \mathcal{X} \\ \varphi \leq \psi}}
\mathfrak{a}(x,\varphi),)
\end{align}
and
\begin{align}
\underline{\mathcal{A}}(x,\psi,s)\coloneqq \sup_{\substack{\varphi \in \mathcal{X} \\ \varphi \geq \psi}} \mathcal{A}(x,\varphi,s), \quad (\text{resp.\@ }  \underline{\mathfrak{a}}(x,\psi)
&\coloneqq
\inf_{\substack{\varphi \in \mathcal{X} \\ \varphi \geq \psi}}
\mathfrak{a}(x,\varphi).)
\end{align}

\begin{Remark}\label{rem:propaext}
It is straightforward to verify that both~$\overline{\mathcal{A}}$ and~$\underline{\mathcal{A}}$ are nonincreasing (resp.\@~$\overline{\mathfrak{a}}$ and~$\underline{\mathfrak{a}}$ are nondecreasing) in the second variable. 
Moreover, one easily checks that
\[
\overline{\mathcal{A}}(x,\psi,s) \leq \underline{\mathcal{A}}(x,\psi,s) \quad (\textup{resp.\@ } \overline{\mathfrak{a}}(x,\psi) \leq \underline{\mathfrak{a}}(x,\psi))
\quad \text{for all } \psi \in \overline{\mathcal{X}}.
\]
Finally, if~$\psi \in \mathcal{X}$, the monotonicity of~$\mathfrak{a}$ in the second variable implies that
\[
\overline{\mathcal{A}}(x,\psi,s) =  \mathcal{A}(x,\psi,s) =\underline{\mathcal{A}}(x,\psi,s) \quad (\textup{resp.\@ } \underline{\mathfrak{a}}(x,\psi)
= \mathfrak{a}(x,\psi)
= \overline{\mathfrak{a}}(x,\psi)).
\]
In particular, this implies that any classical subsolution (respectively, supersolution) is also a viscosity subsolution (respectively, supersolution).
\end{Remark}

\begin{Remark}
The idea of using the monotonicity of an operator to extend its definition to a more general class of functions via a supremum or an infimum is widespread in analysis. For instance, in the definition of the Lebesgue integral, one first defines the integral for simple functions and then extends it to general nonnegative functions~$f$ by setting
\begin{equation}
    \overline{\int} f = \sup\left\{\int \phi : \, \phi \leq f,\ \phi \text{ simple} \right\}.
\end{equation}
Only when~$f$ is measurable does this quantity coincide with the usual Lebesgue integral. Since we will observe that this definition is essentially of viscosity type, it can be regarded as a viscosity approach to integration.
\end{Remark}

\normalcolor

We are now ready to give a second equivalent generalized notion of solution for the DPP with boundary values~\eqref{eq:BVPDPP}.

\begin{Definition}[Viscosity solution of the DPP]\label{def.weak}
Let~$\Omega \subset \mathbb{R}^N$ be an open domain, let~$g:\RR^N\setminus \Omega \to \RR$, and let~$
\mathcal{A}\colon \Omega \times \mathcal{X} \times \mathbb{R} \to \mathbb{R}$
satisfy assumptions~\textup{\ref{asA-it:a}--\ref{asA-it:abis}--\ref{asA-it:b}}. 
Let~$\mathfrak{a}\colon \Omega \times \mathcal{X} \to \mathbb{R}$ be defined as in~\Cref{rem:defAa}, and let
$\underline{\mathfrak{a}}, \overline{\mathfrak{a}}\colon \mathbb{R}^N \times \overline{\mathcal{X}} \to \mathbb{R}$
denote the corresponding extensions.

We say that~$u \in \overline{\mathcal{X}}$ is a \emph{viscosity supersolution} of~\eqref{eq:BVPDPP} if
$u(x) \geq g(x)$ for all ~$x \in \RR^N \setminus \Omega$,
and
\[
\overline{\mathcal{A}}(x,u,u(x))\geq0 \quad (\textup{resp.\@ } u(x) \geq \overline{\mathfrak{a}}(x,u)),
\quad \text{for all } x \in \Omega.
\]

We say that~$u \in \overline{\mathcal{X}}$ is a \emph{viscosity subsolution} of~\eqref{eq:BVPDPP} if
$u(x) \leq g(x)$ for all ~$x \in \RR^N \setminus \Omega$,
and
\[
\underline{\mathcal{A}}(x,u,u(x))\leq 0 \quad (\textup{resp.\@ } u(x) \leq \underline{\mathfrak{a}}(x,u)),
\quad \text{for all } x \in \Omega.
\]

Finally, a function~$u \in \mathcal{X}$ is called a \emph{viscosity solution} if it is both a viscosity subsolution and a viscosity supersolution.
\end{Definition}

We will verify that both definitions of viscosity solutions are equivalent in~\Cref{prop-equiv}.

\subsection{Existence of viscosity solutions of the DPP.}
Our first objective is to establish the existence of viscosity solutions to the DPP with boundary values~\eqref{eq:BVPDPP}. 
The hypotheses of the following result are based on the classical Perron method and on ideas from~\cite{LiuSchikorra}. 
The proof is given in~\Cref{se:existencia}.

\begin{Theorem}\label{teo.1.intro}
Let~$\Omega \subset \mathbb{R}^N$ be an open domain, let~$g :\RR^N\setminus \Omega \to \RR$ bounded, and let~$
\mathcal{A}\colon \Omega \times \mathcal{X} \times \mathbb{R} \to \mathbb{R}$
satisfy assumptions~\textup{\ref{asA-it:a}--\ref{asA-it:abis}--\ref{asA-it:b}}.
Assume that either
\begin{flalign}
    &\text{there exists at least one viscosity subsolution of~\eqref{eq:BVPDPP},} \tag{H$_1$} \label{H1}\\
    &\text{and all viscosity subsolutions of~\eqref{eq:BVPDPP} are uniformly bounded from above,} \tag{H$_2$} \label{H2}
\end{flalign}
or
\begin{flalign}
    &\text{there exists at least one viscosity supersolution of~\eqref{eq:BVPDPP},} \tag{H$_1^*$} \label{H1e}\\
    &\text{and all viscosity supersolutions of~\eqref{eq:BVPDPP} are uniformly bounded from below.} \tag{H$_2^*$} \label{H2e}
\end{flalign}
Then, there exists at least one viscosity solution of~\eqref{eq:BVPDPP}.
\end{Theorem}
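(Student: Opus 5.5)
We argue by a Perron-type construction and only treat the case \eqref{H1}--\eqref{H2}; the case \eqref{H1e}--\eqref{H2e} follows by the symmetric argument, taking the infimum of supersolutions and exchanging the roles of $\overline{\mathfrak{a}}$ and $\underline{\mathfrak{a}}$. We work with the pointwise formulation of \Cref{def.weak}, which we take to be equivalent to \Cref{def:DPPviscosity} (this is \Cref{prop-equiv}), together with the explicit operators $\mathfrak{a}$, $\underline{\mathfrak{a}}$, $\overline{\mathfrak{a}}$. Let $\mathcal{S}$ be the set of viscosity subsolutions of \eqref{eq:BVPDPP}. By \eqref{H1} it is nonempty, and by \eqref{H2} the function
\[
u(x)\coloneqq \sup_{v\in\mathcal{S}} v(x)
\]
is bounded above. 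It is also bounded below by any fixed element of $\mathcal{S}$, so $u\in\overline{\mathcal{X}}$. This is the point where \Cref{def:DPPviscosity} pays off: we never need $u$ to be measurable, since only bounded functions are required.

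\textbf{Step 1: $u$ is a subsolution.} For $x\notin\Omega$ every $v\in\mathcal{S}$ satisfies $v(x)\le g(x)$, hence $u(x)\le g(x)$. For $x\in\Omega$, take any $\varphi\in\mathcal{X}$ with $\varphi\ge u$. Then $\varphi\ge v$ for every $v\in\mathcal{S}$, so the subsolution property gives $v(x)\le \mathfrak{a}(x,\varphi)$. Taking the supremum over $v$ yields $u(x)\le\mathfrak{a}(x,\varphi)$, and then the infimum over $\varphi$ gives $u(x)\le\underline{\mathfrak{a}}(x,u)$. Thus $u\in\mathcal{S}$ and $u$ is the maximal subsolution.

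\textbf{Step 2: $u$ is a supersolution (the main step).} For $x\notin\Omega$, we check that $u(x)\ge g(x)$. Otherwise the function $w$ equal to $u$ except $w(x)=g(x)$ would be a larger subsolution: interior points only see test functions $\varphi\ge w\ge u$, so they remain fine, contradicting maximality.

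For $x_0\in\Omega$, suppose $u(x_0)<\overline{\mathfrak{a}}(x_0,u)$. Define $w=u$ off $x_0$ and $w(x_0)=\overline{\mathfrak{a}}(x_0,u)$, so $w\ge u$ and $w\neq u$. We must check that $w\in\mathcal{S}$.
\begin{itemize}
\item At points $y\ne x_0$ in $\Omega$: by monotonicity (\Cref{rem:propaext}), $w(y)=u(y)\le\underline{\mathfrak{a}}(y,u)\le\underline{\mathfrak{a}}(y,w)$.
\item At $x_0$: we need $\overline{\mathfrak{a}}(x_0,u)\le\underline{\mathfrak{a}}(x_0,w)$. Every $\varphi\in\mathcal{X}$ with $\varphi\ge w$ satisfies $\varphi\ge u$, and for every $\psi\in\mathcal{X}$ with $\psi\le u$ we have $\psi\le\varphi$. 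Monotonicity of $\mathfrak{a}$ then gives $\mathfrak{a}(x_0,\psi)\le\mathfrak{a}(x_0,\varphi)$. Taking the supremum over $\psi$ and the infimum over $\varphi$ gives exactly $\overline{\mathfrak{a}}(x_0,u)\le\underline{\mathfrak{a}}(x_0,w)$.
\end{itemize}
This is the only place where the interplay between $\overline{\mathfrak{a}}$ and $\underline{\mathfrak{a}}$ is used. The bump $w$ is bounded and agrees with $g$-constraints outside $\Omega$, so $w\in\mathcal{S}$ with $w(x_0)>u(x_0)$, contradicting maximality. Hence $u(x)\ge\overline{\mathfrak{a}}(x,u)$ on $\Omega$.

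One subtlety remains. \Cref{def.weak} requires a \emph{solution} to lie in $\mathcal{X}$, whereas our $u$ lies only in $\overline{\mathcal{X}}$. We expect the theorem to be read with solutions in $\overline{\mathcal{X}}$, as the framework intends; if membership in $\mathcal{X}$ is required, it must come from an additional structural assumption on $\mathcal{X}$. The main obstacle is therefore not the construction itself but making sure that the point modification $w$ is admissible: it must stay in $\overline{\mathcal{X}}$, which is trivial, and it must not require $w\in\mathcal{X}$, which the viscosity definition avoids.
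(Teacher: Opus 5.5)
Your proof is correct. Step 1 is the same as in the paper, but you obtain the supersolution property by a different mechanism.

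The paper sets its candidate equal to $g$ outside $\Omega$, so the exterior condition is automatic. It then makes a \emph{global} one-step replacement: $v(x)=\underline{\mathfrak{a}}(x,\overline{u})$ in $\Omega$ and $v=g$ outside. Step 1 gives $\overline{u}\le v$, and monotonicity gives $v\le\underline{\mathfrak{a}}(\cdot,v)$, so $v$ is a subsolution and maximality forces $v=\overline{u}$. This yields the exact fixed-point identity $\overline{u}=\underline{\mathfrak{a}}(\cdot,\overline{u})$ in $\Omega$, and the supersolution inequality follows from $\underline{\mathfrak{a}}\ge\overline{\mathfrak{a}}$.

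You argue \emph{locally}, by a one-point bump and a contradiction with maximality. You also need a second bump for the exterior condition, since you take the supremum on all of $\RR^N$.

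What your route buys is robustness. Your competitor $w$ differs from $u$ at a single point and is trivially bounded. The paper's $v$ must be bounded above, i.e.\ $\sup_{\Omega}\underline{\mathfrak{a}}(\cdot,\overline{u})<\infty$. The paper asserts this ``by construction'', but it needs a short extra argument, for instance truncating $v$ at a level above the bound in (H$_2$).

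What the paper's route buys is the stronger identity $\overline{u}=\underline{\mathfrak{a}}(\cdot,\overline{u})$. You can recover it with your method: bumping to any real value in $(u(x_0),\underline{\mathfrak{a}}(x_0,u)]$ works, because $\underline{\mathfrak{a}}(x_0,w)\ge\underline{\mathfrak{a}}(x_0,u)$. This also makes your comparison $\overline{\mathfrak{a}}(x_0,u)\le\underline{\mathfrak{a}}(x_0,w)$ unnecessary.

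One small fix: in this abstract setting $\overline{\mathfrak{a}}(x_0,u)$ could be $+\infty$. So set $w(x_0)=t$ for a real $t$ with $u(x_0)<t\le\overline{\mathfrak{a}}(x_0,u)$; nothing else changes.

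Your remark on $\mathcal{X}$ versus $\overline{\mathcal{X}}$ matches the paper. Its Perron solution also lies only in $\overline{\mathcal{X}}$, so ``solution'' has to be read there.
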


It is important to note that the previous result is not concerned with neither uniqueness nor comparison of solutions. We will discuss this in~\Cref{se:comparison}, providing some counterexamples.

\subsection{Comparison results and stability. }
Next, we establish a comparison result between a classical strict supersolution (resp.\@ subsolution) and a viscosity subsolution (resp.\@ supersolution).

\begin{Theorem} \label{thm:striccomparison}
Let~$\Omega \subset \mathbb{R}^N$ be an open domain, let~$g : \mathbb{R}^N \setminus \Omega \to \RR$ be bounded, and let 
$\mathcal{A}\colon \mathbb{R}^N \times \mathcal{X} \times \mathbb{R} \to \mathbb{R}$
satisfy assumptions~\textup{\ref{asA-it:a}--\ref{asA-it:abis}--\ref{asA-it:b}}. Additionally, assume the following:
\begin{enumerate}
\item \textup{\textbf{Stability of~$\mathcal{X}$ by translations}}. We have~$\mathcal{X}+c \subset \mathcal{X}$, in the sense that if~$\varphi \in \mathcal{X}$, then~$\varphi + c \in \mathcal{X}$ for any~$c \in \RR$.
\item \textup{\textbf{Translation invariance}}. Given~$\phi \in \mathcal{X}$ and~$C \in \RR$, we have
\[
\mathcal{A}(x,\phi+C,\phi(x)+C)=\mathcal{A}(x,\phi,\phi(x)).
\]
\item \textup{\textbf{Continuity in the third variable}}. Given~$\phi \in \mathcal{X}$, there exists a modulus of continuity~$\Lambda$ such that
\[
|\mathcal{A}(x,\phi,\phi(x))-\mathcal{A}(x,\phi,\phi(x)-\varepsilon)|\leq \Lambda(\varepsilon)
\quad \textup{as} \quad \varepsilon \to 0^+ \quad \textup{for all} \quad x \in \Omega.
\]
\end{enumerate}
Let~$\overline{u} \in \mathcal{X}$ (resp.\@~$\underline{u} \in \mathcal{X}$) be a classical strict supersolution (resp.\@ subsolution), that is, there exists~$\delta > 0$ such that
\begin{equation}
\begin{cases}
\mathcal{A}(x,\overline{u},\overline{u}(x)) \geq \delta, & x \in \Omega, \\
\overline{u}(x) \geq g(x), & x \in \RR^N \setminus \Omega,
\end{cases}
\quad \left(\textup{resp.\@ } 
\begin{cases}
\mathcal{A}(x,\underline{u},\underline{u}(x)) \leq -\delta, & x \in \Omega, \\
\underline{u}(x) \leq g(x), & x \in \RR^N \setminus \Omega
\end{cases}
\right).
\end{equation}
Then, given any viscosity subsolution (resp.\@ supersolution)~$v \in \overline{\mathcal{X}}$, we have~$v \leq \overline{u}$ in~$\RR^N$ (resp.\@~$v \geq \underline{u}$ in~$\RR^N$).
\end{Theorem}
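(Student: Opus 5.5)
We argue by contradiction in the supersolution case; the subsolution case is symmetric. Suppose $v \in \overline{\mathcal{X}}$ is a viscosity subsolution with $\sup_{\RR^N}(v-\overline{u}) =: M > 0$. Since $v$ and $\overline{u}$ are bounded, $M<\infty$. Outside $\Omega$ we have $v \leq g \leq \overline{u}$, so any point where $v-\overline{u}$ is close to $M$ must lie in $\Omega$. The supremum need not be attained, so we work with almost-maximizers. Given $\varepsilon>0$, we pick $x_\varepsilon \in \Omega$ with
\[
v(x_\varepsilon) - \overline{u}(x_\varepsilon) \geq M - \varepsilon .
\]

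The key step is to build an admissible test function from $\overline{u}$. Set $\varphi \coloneqq \overline{u} + M$. By stability of $\mathcal{X}$ under translations, $\varphi \in \mathcal{X}$, and by definition of $M$ we have $\varphi \geq v$ on $\RR^N$. So $\varphi$ is admissible in \Cref{def:DPPviscosity}, and the subsolution property at $x_\varepsilon$ gives
\[
\mathcal{A}(x_\varepsilon, \overline{u}+M, v(x_\varepsilon)) \leq 0 .
\]
Since $v(x_\varepsilon) \geq \overline{u}(x_\varepsilon)+M-\varepsilon$, monotonicity of $\mathcal{A}$ in the third variable (assumption \ref{asA-it:abis}) yields
\[
\mathcal{A}\bigl(x_\varepsilon, \overline{u}+M, \overline{u}(x_\varepsilon)+M-\varepsilon\bigr) \leq 0 .
\]

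Next we compare this with the strict supersolution inequality. By continuity in the third variable, applied to $\phi = \overline{u}+M \in \mathcal{X}$, and by translation invariance,
\[
\mathcal{A}\bigl(x_\varepsilon, \overline{u}+M, \overline{u}(x_\varepsilon)+M-\varepsilon\bigr) \geq \mathcal{A}\bigl(x_\varepsilon,\overline{u}+M,\overline{u}(x_\varepsilon)+M\bigr) - \Lambda(\varepsilon) = \mathcal{A}\bigl(x_\varepsilon,\overline{u},\overline{u}(x_\varepsilon)\bigr)-\Lambda(\varepsilon) \geq \delta - \Lambda(\varepsilon).
\]
Combining the two displays gives $\delta \leq \Lambda(\varepsilon)$. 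Since $\Lambda$ is a modulus of continuity, $\Lambda(\varepsilon) \to 0$, and choosing $\varepsilon$ small enough contradicts $\delta>0$. Hence $v \leq \overline{u}$ on $\RR^N$.

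The main obstacle is that the supremum need not be attained. This is handled by using the modulus of continuity, which must be uniform in $x$ because $x_\varepsilon$ moves with $\varepsilon$. That is exactly why the hypothesis requires $\Lambda$ independent of $x$, and why it is applied to the fixed function $\phi=\overline{u}+M$. The strict margin $\delta$ then absorbs the error $\Lambda(\varepsilon)$.

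For the subsolution case, we take $\varphi=\underline{u}-m$ with $m=\sup(\underline{u}-v)>0$, so that $\varphi\leq v$. We then use the supersolution inequality from \Cref{def:DPPviscosity}, together with the continuity hypothesis in the analogous direction $+\varepsilon$. That direction can be obtained from the stated one via translation invariance, since $\mathcal{A}(x,\phi,\phi(x)+\varepsilon)=\mathcal{A}(x,\phi+\varepsilon,(\phi+\varepsilon)(x))$ and we can compare it with $\phi+\varepsilon$ evaluated at $(\phi+\varepsilon)(x)-\varepsilon$.
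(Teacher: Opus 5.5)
Your proof of the strict supersolution case is correct and follows the paper's proof. Both argue by contradiction with $M=\sup(v-\overline u)>0$, take an almost-maximizer $x_\varepsilon\in\Omega$, use the test function $\overline u+M\in\mathcal X$ and monotonicity in the third variable, and let $\delta$ absorb the continuity error. Your order differs slightly: you apply continuity to the fixed function $\overline u+M$ and then translation invariance at the diagonal value $\overline u(x_\varepsilon)+M$. This uses translation invariance exactly as stated. The paper instead applies it off the diagonal, $\mathcal A(x_\varepsilon,\overline u+M,\overline u(x_\varepsilon)+M-\varepsilon)=\mathcal A(x_\varepsilon,\overline u,\overline u(x_\varepsilon)-\varepsilon)$, which is a stronger form than the hypothesis.

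Your last paragraph does not work. The identity $\mathcal A(x,\phi,\phi(x)+\varepsilon)=\mathcal A(x,\phi+\varepsilon,(\phi+\varepsilon)(x))$ is false. Only the function is shifted; the third argument $\phi(x)+\varepsilon=(\phi+\varepsilon)(x)$ is the same on both sides. By translation invariance the right-hand side equals $\mathcal A(x,\phi,\phi(x))$, so the identity would make $\mathcal A$ constant in $s$ on $[\phi(x),\phi(x)+\varepsilon]$. That already fails for $\mathcal A(x,\phi,s)=s-\mathfrak a(x,\phi)$.

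In the reversed case the almost-minimizer only gives $v(x_\varepsilon)\le \underline u(x_\varepsilon)-m+\varepsilon$. You therefore need an upper bound on $\mathcal A(x,\phi,\phi(x)+\varepsilon)-\mathcal A(x,\phi,\phi(x))$, uniform in $x$, i.e. continuity from the right. This cannot be derived from the one-sided continuity hypothesis. Counterexample:
\begin{itemize}
\item Take $N=1$, $\Omega=(0,1)$, $g\equiv 0$, and $\mathcal X$ the constant functions.
\item Set $\mathcal A(x,\varphi,s)=G_x(s-\varphi(2))$, with $G_x(t)=t-\delta$ for $t\le 0$ and $G_x(t)=\delta(t/x-1)$ for $t>0$ when $x\in\Omega$, and $G_x(t)=t$ otherwise.
\item Assumptions (a)--(c), stability of $\mathcal X$ by translations, translation invariance, and the one-sided continuity with $\Lambda(\varepsilon)=\varepsilon$ all hold.
\item $\underline u\equiv 0$ is a strict classical subsolution, since $\mathcal A(x,0,0)=-\delta$.
\item $v=x-1$ in $\Omega$, $v=0$ outside, is a viscosity supersolution. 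The admissible tests are constants $c\le -1$, and $\mathcal A(x,c,v(x))=G_x(x-1-c)\ge G_x(x)=0$.
\item Yet $v<\underline u$ in $\Omega$. Right-continuity indeed fails uniformly: $G_x(\varepsilon)-G_x(0)=\delta\varepsilon/x$.
\end{itemize}

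So for the ``resp.'' part you must assume the symmetric condition $|\mathcal A(x,\phi,\phi(x)+\varepsilon)-\mathcal A(x,\phi,\phi(x))|\le\Lambda(\varepsilon)$. The paper tacitly uses this when it writes out only the supersolution case and calls the other analogous. It holds in all the paper's examples, where $\mathcal A$ is Lipschitz in $s$. With it, your test function $\underline u-m$ and the mirrored chain of inequalities finish the proof.
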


This result is proved in~\Cref{se:comparison}. As a consequence, if there exists a classical strict supersolution (resp.\@ subsolution), then~\eqref{H2} and~\eqref{H1e} (resp.~\eqref{H2e} and~\eqref{H1}) hold.

\subsection{Convergence of viscosity solutions of the DPP.}
Next, we assume that the DPP is related to an
approximation of a certain elliptic PDE boundary value problem
\begin{equation} \label{eq:BVPPDE} 
\begin{cases}
    F\left(x, u(x), \nabla u(x), D^2u(x) \right) = 0, \quad &x\in \Omega,\\
    u(x)=g(x),  \quad &x\in \partial \Omega.
\end{cases}
\end{equation} 
We will show that the \emph{stability} hypothesis of the seminal paper of Barles and Souganidis~\cite{BarlesSouganidis} can be relaxed so as to require only the existence of viscosity solutions of the approximation scheme (instead of classical ones) in order to ensure convergence of the scheme. Furthermore, if there exists a classical strict supersolution and a classical strict subsolution, both bounded independently of~$\rho$, then we can easily check the stability hypothesis thanks to the result of the previous section.

The following hypotheses on~$F$ are standard when working in the framework of viscosity solutions, although we do not make any explicit use of them in our work.
More precisely, assume~$F$ is \emph{proper}, that is, nondecreasing in the second variable, and that~$F$ is \emph{elliptic}, that is, for all~$(x,s,p)\in \Omega\times\RR\times \RR^N$,
\[
F(x,s,p,\overline{M})\leq F(x,s,p,\underline{M})\quad \textup{for all} \quad \overline{M},\underline{M}\in S^N \quad \textup{such that} \quad \overline{M} \geq \underline{M},
\]
where~$M\geq N$ denotes the usual partial ordering in~$S^N$, the space of~$N\times N$ symmetric matrices.

\begin{Remark}
Under this definition of ellipticity, observe that our model becomes~$F(x,s,p,X) = -\operatorname{trace}(X)$, associated with the PDE~$-\Delta u(x)=0$.
\end{Remark}
Let~$\mathcal{B}_\rho$ be the scheme that incorporates the boundary condition (see~\Cref{rem:schemewithboundary}), that is,
\[
\mathcal{B}_\rho(x,\varphi,s) \coloneqq
\begin{cases}
\mathcal{A}_\rho(x,\varphi,s), & x \in \Omega, \\[1mm]
s - g(x), & x \in \RR^N \setminus \Omega.
\end{cases}
\]

The scheme~$\mathcal{B}_\rho$ is related to the PDE with boundary values~\eqref{eq:BVPPDE} through the following \emph{consistency} assumption: For all~$x\in \RR^N$ and~$\varphi \in C_{b}^\infty(\RR^N)$, there exists a positive modulus of continuity~$\omega$ such that
\begin{align}\label{cons1}
    \limsup_{\rho\to0^+,\
    y\to x,\
    \xi\to0} & \mathcal{B}_\rho(y, \varphi+\xi, \varphi(y)+\xi+\omega(\rho))  \\
    &\leq \begin{cases}
        F^*(x,\varphi(x), \nabla \varphi(x), D^2 \varphi(x)), \quad \textup{if} \quad x\in \Omega,\\
        \max\{\varphi(x)-g_*(x), F^*(x,\varphi(x), \nabla \varphi(x), D^2 \varphi(x))\}\quad \textup{if} \quad x\in \partial\Omega,
    \end{cases}
\end{align}
and
\begin{align}\label{cons2}
    \liminf_{\rho\to0^+,\
    y\to x,\
    \xi\to0}  &\mathcal{B}_\rho(y, \varphi+\xi, \varphi(y)+\xi -\omega(\rho))  \\
    &\geq \begin{cases}
        F_*(x,\varphi(x), \nabla \varphi(x), D^2 \varphi(x)), \quad \textup{if} \quad x\in \Omega,\\
        \min\{\varphi(x)-g^*(x), F_*(x,\varphi(x), \nabla \varphi(x), D^2 \varphi(x))\}\quad \textup{if} \quad x\in \partial\Omega.
    \end{cases}
\end{align}
The precise definitions of the upper and lower envelopes~$F^*,F_*,g^*$ and~$g_*$ will be stated in~\Cref{se:BS}. With a positive modulus of continuity we mean a continuous and strictly increasing function~$\omega:\RR_+\to \RR_+$ such that~$\omega(0)=0$ (so that~$w(\rho)>0$ for all~$\rho>0$).

Finally, we introduce the relaxed \emph{weak stability} hypothesis:
\[
\textup{For all~$\rho>0$, there exists a \emph{viscosity} solution~$u_\rho\in \overline{\mathcal{X}}$ of~\eqref{eq:BVPDPP}, with a bound independent of~$\rho$.}
\]

The following result allows one to construct viscosity subsolutions and supersolutions as half-relaxed limits of solutions to the approximation scheme. The definition of viscosity solutions will be given later in~\Cref{se:BS}, together with the proofs of the results.

\begin{Theorem} \label{teo.convergencia.intro.bis}
Let~$\Omega \subset \mathbb{R}^N$ be an open domain,~$C_b^\infty(\RR^N)\subset \mathcal{X}$,~$g \in \overline{\mathcal{X}}$, and~$\mathcal{A}_\rho\colon \Omega \times \mathcal{X} \times \mathbb{R} \to \mathbb{R}$
satisfy assumptions~\textup{\ref{asA-it:a}--\ref{asA-it:abis}--\ref{asA-it:b}} for each~$\rho>0$. Additionally, assume that the scheme is consistent and weakly stable. Then,
\begin{equation}
        \overline{u}(x) \coloneqq \limsup_{\rho \to 0, \ y \to x} u_\rho(y)
        \quad \text{and} \quad
        \underline{u}(x) \coloneqq \liminf_{\rho \to 0, \ y \to x} u_\rho(y)
\end{equation}
are, respectively, an usc.\@ viscosity subsolution and a lsc.\@ viscosity supersolution of problem~\eqref{eq:BVPPDE}.
\end{Theorem}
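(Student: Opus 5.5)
We follow the Barles--Souganidis half-relaxed limits argument, replacing the scheme applied to $u_\rho$ with the viscosity formulation of~\Cref{def:DPPviscosity}. We prove that $\overline{u}$ is a subsolution; the supersolution case is symmetric, using~\eqref{cons2}. By weak stability the $u_\rho$ are bounded independently of $\rho$, so $\overline{u}$ is finite, bounded and upper semicontinuous by construction. Fix $x_0\in\overline{\Omega}$ and $\varphi\in C_b^\infty(\RR^N)$ such that $\overline{u}-\varphi$ has a maximum at $x_0$. Replacing $\varphi$ by $\varphi+|x-x_0|^4$, we may assume the maximum is strict, with $\overline{u}(x_0)=\varphi(x_0)$. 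We may also arrange $\overline{u}-\varphi\le -1$ outside a ball $B_r(x_0)$ by modifying $\varphi$ far away: since $u_\rho$ are uniformly bounded, we add a smooth bounded bump, which does not change derivatives at $x_0$.

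\textbf{Step 1 (approximate maxima).} Pick $\rho_n\to0$ and $y_n\to x_0$ with $u_{\rho_n}(y_n)\to\overline{u}(x_0)$. Set $M_n:=\sup_{\RR^N}(u_{\rho_n}-\varphi)$, which is finite by boundedness. Since $u_{\rho_n}$ need not be semicontinuous, the supremum may not be attained, so choose $x_n$ with $u_{\rho_n}(x_n)-\varphi(x_n)\ge M_n-\rho_n\omega(\rho_n)$. The slack must be $o(\omega(\rho_n))$, and taking it to be $\rho_n\omega(\rho_n)$ suffices. The standard argument, using strictness of the maximum and the definition of $\overline{u}$, gives $x_n\to x_0$, $M_n\to0$ and $u_{\rho_n}(x_n)\to\overline{u}(x_0)$.

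\textbf{Step 2 (testing the DPP).} Let $\xi_n:=M_n$. Then $\psi_n:=\varphi+\xi_n\ge u_{\rho_n}$ on $\RR^N$, and $\psi_n\in\mathcal{X}$ because $C_b^\infty\subset\mathcal{X}$. Suppose first $x_n\in\Omega$. The viscosity subsolution property (\Cref{def:DPPviscosity}) gives $\mathcal{A}_{\rho_n}(x_n,\psi_n,u_{\rho_n}(x_n))\le0$. Moreover
\[
u_{\rho_n}(x_n)\ge \varphi(x_n)+\xi_n-\rho_n\omega(\rho_n)\ge\varphi(x_n)+\xi_n-\omega(\rho_n)
\]
for small $\rho_n$. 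Hence by monotonicity~\ref{asA-it:abis},
\[
\mathcal{B}_{\rho_n}(x_n,\varphi+\xi_n,\varphi(x_n)+\xi_n-\omega(\rho_n))\le0 .
\]
If $x_n\notin\Omega$, then $\mathcal{B}_{\rho_n}(x_n,\cdot,s)=s-g(x_n)$ and $u_{\rho_n}(x_n)\le g(x_n)$, which yields the same inequality. Here we want $+\omega$ so that~\eqref{cons1} applies, which gives an upper bound. The correct choice is to use~\eqref{cons2}, since it bounds the $\liminf$ from below. The argument then runs as follows. For the subsolution we need $F_*\le0$, respectively $\min\{\varphi-g^*,F_*\}\le0$ at boundary points. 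Passing to the $\liminf$ along $(\rho_n,x_n,\xi_n)$ in the display and invoking~\eqref{cons2} gives exactly $F_*(x_0,\varphi(x_0),\nabla\varphi(x_0),D^2\varphi(x_0))\le0$ if $x_0\in\Omega$, and $\min\{\overline{u}(x_0)-g^*(x_0),F_*(\dots)\}\le0$ on $\partial\Omega$. This is the relaxed (generalized boundary) subsolution condition of~\Cref{se:BS}. The supersolution case uses the reversed construction ($\xi_n=\inf(u_\rho-\varphi)$, test functions below $u_\rho$, slack with $+\omega$) and~\eqref{cons1}.

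\textbf{Main obstacle.} The delicate point is Step 1 combined with the shift in the third variable. Because $u_\rho\in\overline{\mathcal{X}}$ is only bounded, the supremum $M_n$ is not attained. The positive modulus $\omega(\rho)$ in the consistency hypotheses is precisely what absorbs the approximation error $u_{\rho_n}(x_n)-\varphi(x_n)-\xi_n\in[-\rho_n\omega(\rho_n),0]$, via the monotonicity in the third variable. Getting the sign conventions of~\eqref{cons1}/\eqref{cons2} to match the sub/supersolution direction, and checking that the $\liminf$ is taken along admissible sequences ($\xi_n\to0$, $y=x_n\to x_0$), is where care is needed.
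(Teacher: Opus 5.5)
Your proposal is correct and follows essentially the paper's proof. Like the paper, you use the Barles--Souganidis half-relaxed-limit argument, test the viscosity subsolution inequality of the DPP with $\varphi+\xi_n\in C_b^\infty(\RR^N)\subset\mathcal{X}$, absorb the non-attained supremum through the positive modulus $\omega(\rho)$ in the third variable via monotonicity, and conclude with \eqref{cons2}. Your explicit near-maximizers $x_n$, with $x_n\to x_0$ derived from strictness of the maximum, are a cleaner version of the paper's ``chosen sufficiently close to the global supremum'' step. Two small repairs: tidy up the mid-proof false start about \eqref{cons1}, and make the strictifying perturbation bounded (e.g.\ $|x-x_0|^4/(1+|x-x_0|^4)$ instead of $|x-x_0|^4$) so the test function stays in $C_b^\infty(\RR^N)$ and consistency applies.
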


\begin{Remark}\label{rem:restrictedTest}
    Note that, in some cases, the definition of viscosity solution involves test functions that are more restrictive than~$\cont_b^\infty(\RR^N)$. 
For instance, in the case of~$p$-harmonic functions, one can additionally assume that~$\nabla \varphi(x)\neq 0$ (see~\cite{JuutinenLindqvistManfredi2001, AsymptoticMeanValuePharmonic}). 
In general, given a class of test functions~$\mathcal{T}$ that defines a notion of viscosity solution, the previous equivalence still holds as long as~$\mathcal{T} \subset \mathcal{X}$. 
The same observation applies in nonlocal problems, where the concept of viscosity solution may vary  from the standard local one.
\end{Remark}

Finally, if the limit problem satisfies a special type of comparison principle (introduced in~\cite{BarlesSouganidis} as the \emph{strong uniqueness} property), then the scheme converges. 

\begin{Corollary}\label{cor:convSchm}
Let the assumptions of~\Cref{teo.convergencia.intro.bis} hold. Additionally, assume the following \emph{strong uniqueness} property:
If~$u$ is an usc.\@ viscosity subsolution and~$v$ is a lsc.\@ viscosity supersolution of problem~\eqref{eq:BVPPDE}, then~$u\leq v$ in~$\overline{\Omega}$.

Then, the sequence of viscosity solutions~$\{u_\rho\}_{\rho>0}$ of~\eqref{eq:BVPDPP} converges locally uniformly to the unique viscosity solution of~\eqref{eq:BVPPDE}.
\end{Corollary}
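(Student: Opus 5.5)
The plan is the classical Barles--Souganidis argument: sandwich the half-relaxed limits using strong uniqueness. Let $\{u_\rho\}_{\rho>0}$ be the viscosity solutions given by weak stability, with $\sup_\rho \|u_\rho\|_\infty \leq M$. Then the half-relaxed limits $\overline{u}$ and $\underline{u}$ of \Cref{teo.convergencia.intro.bis} are finite and bounded by $M$. By that theorem, $\overline{u}$ is an usc.\@ viscosity subsolution and $\underline{u}$ is a lsc.\@ viscosity supersolution of~\eqref{eq:BVPPDE}.

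\textbf{Step 1 (the two limits coincide).} By definition of $\limsup$ and $\liminf$ (take $y=x$ along the same sequence), we have $\underline{u}\leq \overline{u}$ in $\overline{\Omega}$. Applying the strong uniqueness hypothesis to the pair $(\overline{u},\underline{u})$ gives $\overline{u}\leq\underline{u}$ in $\overline{\Omega}$. Hence $u\coloneqq\overline{u}=\underline{u}$ on $\overline{\Omega}$. This function is both usc and lsc, hence continuous, and it is simultaneously a viscosity sub- and supersolution, so it is a viscosity solution of~\eqref{eq:BVPPDE}. Uniqueness also follows from strong uniqueness: if $w$ is any other (continuous) viscosity solution, then $w\leq u$ and $u\leq w$.

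\textbf{Step 2 (local uniform convergence).} Argue by contradiction. Suppose convergence is not locally uniform on some compact $K\subset\overline{\Omega}$. Then there are $\varepsilon>0$, $\rho_n\to0$ and $x_n\in K$ with $|u_{\rho_n}(x_n)-u(x_n)|\geq\varepsilon$. By compactness, pass to a subsequence with $x_n\to x\in K$. Suppose first that $u_{\rho_n}(x_n)-u(x_n)\geq\varepsilon$ infinitely often. Then
\[
\overline{u}(x)\geq\limsup_n u_{\rho_n}(x_n)\geq \liminf_n u(x_n)+\varepsilon=u(x)+\varepsilon,
\]
where the last equality uses the continuity of $u$. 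This contradicts $\overline{u}=u$. The case with the opposite sign is symmetric and uses $\underline{u}$.

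\textbf{Main obstacle.} The delicate point is the meaning of the convergence domain. The relaxed limits are taken over $y\in\RR^N$, while strong uniqueness only gives equality on $\overline{\Omega}$. I would therefore state the convergence on compact subsets of $\overline{\Omega}$, which is exactly where the argument above works. Some care is also needed to check that restricting $\overline{u}$ and $\underline{u}$ to $\overline{\Omega}$ keeps the semicontinuity used in Step 2. Beyond this, everything reduces to \Cref{teo.convergencia.intro.bis} and the strong uniqueness assumption.
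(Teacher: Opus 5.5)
Your proposal is correct and follows the paper's proof. The paper likewise gets $\underline{u}\le\overline{u}$ by construction, reverses it via strong uniqueness, and deduces pointwise convergence from $\limsup_{\rho\to0^+}u_\rho(x)\le\overline{u}(x)=\underline{u}(x)\le\liminf_{\rho\to0^+}u_\rho(x)$; the only difference is that the paper cites a standard lemma (Lemma~4.1 in Barles' book) for local uniform convergence, whereas your Step~2 proves that lemma directly by compactness and contradiction, correctly restricted to compact subsets of $\overline{\Omega}$ where $\overline{u}=\underline{u}$ is continuous.
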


The previous results (and their proofs) are similar to the ones in~\cite{BarlesSouganidis}.  
This is precisely what we were aiming for: even though we are using an extended notion of solution for the approximating problem, the convergence result remains valid.  
This is a consequence of the fact that~$\cont^\infty_b(\RR^N) \subset \mathcal{X}$, which implies that results that only depend on test functions~$\varphi \in \cont^\infty_b(\RR^N)$, such as the previous one, remain true when considering viscosity solutions (since~$\mathfrak{a}(x,\varphi)=\underline{\mathfrak{a}}(x,\varphi)=\overline{\mathfrak{a}}(x,\varphi)$ for all~$\varphi \in \mathcal{X}$).

\subsection{Asymptotic mean value formulas in the viscosity sense.} The final goal of this paper is to prove that there exists an asymptotic mean value formula given by the DPP such that it characterizes viscosity solutions to the PDE. In order to do so, first we give a precise meaning to being a viscosity super- and subsolution of the asymptotic mean value formula
\begin{equation} \label{eq:asymptoticmeanvalueformula}
    \mathcal{A}_\rho(x,u,u(x)+ o(\rho))=0, 
    \quad \left( \textup{or} \quad u(x)=\mathfrak{a}_\rho(x,u)-o(\rho)\right),
    \quad \textup{as } \rho\to0^+.
\end{equation}
To show that~$u$ is a viscosity solution of this problem if and only if it is a viscosity solution of the PDE, we require a consistency property linking the two notions. In this setting, for any~$x\in \Omega$, any~$\phi \in \cont_b^\infty(\RR^N)$ and some~$\Lambda \colon \RR_+ \times \Omega \to \RR$ such that~$\lim_{\rho \to 0} \Lambda(\rho, x) = 0$, the consistency assumption can be divided in the following two:
\begin{equation} \label{eq:DPPtoPDE}
    \begin{aligned}
          &\liminf_{\rho \to 0} \mathcal{A}_\rho(x,\phi, \phi(x) + \Lambda(\rho,x) ) \leq F^*(x, \phi(x), \nabla\phi(x), D^2\phi(x)), \\
          \big(\textup{resp.} \quad  &\limsup_{\rho \to 0} \mathcal{A}_\rho(x,\phi, \phi(x) + \Lambda(\rho,x) ) \geq F_*(x, \phi(x), \nabla\phi(x), D^2\phi(x))\big),
    \end{aligned}
\end{equation}
and
\begin{equation} \label{eq:PDEtoDPP}
    \begin{aligned}
        &\liminf_{\rho \to 0} \mathcal{A}_\rho(x,\phi, \phi(x) + \Lambda(\rho,x) ) \geq F^*(x, \phi(x), \nabla\phi(x), D^2\phi(x)). \\
        \big(\textup{resp.} \quad  &\limsup_{\rho \to 0} \mathcal{A}_\rho(x,\phi, \phi(x) + \Lambda(\rho,x) ) \leq F_*(x, \phi(x), \nabla\phi(x), D^2\phi(x))\big).
    \end{aligned}
\end{equation}         
Under these conditions we have the result below.
\begin{Theorem} \label{teo.asympt.intro}
If~$u$ is a viscosity solution of~\eqref{eq:asymptoticmeanvalueformula} and the consistency condition~\eqref{eq:DPPtoPDE} holds, then~$u$ is a viscosity solution of the PDE~\eqref{eq:BVPPDE}. Reciprocally, if~$u$ is a viscosity solution of the PDE~\eqref{eq:BVPPDE} and the consistency condition~\eqref{eq:PDEtoDPP} holds, then~$u$ is a viscosity solution of~\eqref{eq:asymptoticmeanvalueformula}.
\end{Theorem}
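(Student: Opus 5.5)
The proof will be a direct unwinding of definitions. The consistency conditions \eqref{eq:DPPtoPDE} and \eqref{eq:PDEtoDPP} are placed exactly between the two notions of solution, so each implication should be a one-line chain of inequalities at a touching point. I will argue with the sub- and supersolution parts separately, using the same class of test functions $\phi\in\cont_b^\infty(\RR^N)$ for both notions (this is where $\cont_b^\infty(\RR^N)\subset\mathcal{X}$ is used, cf.\ \Cref{rem:restrictedTest}). Boundary/exterior conditions are identical in the two notions and need no argument.

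The plan relies on the following reading of the viscosity definition of \eqref{eq:asymptoticmeanvalueformula}, which I take to be the rigorous form of ``$\mathcal{A}_\rho(x,u,u(x)+o(\rho))=0$''. The function $u$ is a \emph{supersolution} if, for every $x\in\Omega$ and every $\phi\in\cont_b^\infty(\RR^N)$ touching $u$ from below at $x$,
\[
\liminf_{\rho\to0}\mathcal{A}_\rho(x,\phi,\phi(x)+\Lambda(\rho,x))\ge 0 .
\]
It is a \emph{subsolution} if, for every $\phi$ touching $u$ from above at $x$,
\[
\limsup_{\rho\to0}\mathcal{A}_\rho(x,\phi,\phi(x)+\Lambda(\rho,x))\le 0 ,
\]
with the same correction $\Lambda$ as in the consistency hypotheses. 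I also use the standard convention of \Cref{se:BS} that $u$ is a PDE supersolution when $F^*(x,\phi(x),\nabla\phi(x),D^2\phi(x))\ge0$ for $\phi$ touching from below, and a subsolution when $F_*(\dots)\le0$ for $\phi$ touching from above. If the paper's definitions carry the $\liminf/\limsup$ differently, the argument below adapts by pairing each with the corresponding line of the consistency conditions.

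\textbf{First implication (scheme to PDE).} Let $u$ be a viscosity solution of \eqref{eq:asymptoticmeanvalueformula}, fix $x\in\Omega$, and let $\phi$ touch $u$ from below at $x$. By the supersolution property and the first line of \eqref{eq:DPPtoPDE},
\[
0\le \liminf_{\rho\to0}\mathcal{A}_\rho(x,\phi,\phi(x)+\Lambda(\rho,x))\le F^*(x,\phi(x),\nabla\phi(x),D^2\phi(x)),
\]
which is the PDE supersolution inequality. If instead $\phi$ touches from above, the subsolution property and the second line of \eqref{eq:DPPtoPDE} give
\[
F_*(x,\phi(x),\nabla\phi(x),D^2\phi(x))\le\limsup_{\rho\to0}\mathcal{A}_\rho(x,\phi,\phi(x)+\Lambda(\rho,x))\le 0 .
\]

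\textbf{Converse implication (PDE to scheme).} Let $u$ be a viscosity solution of \eqref{eq:BVPPDE}. For $\phi$ touching from below at $x$, the PDE gives $F^*(\dots)\ge0$, and the first line of \eqref{eq:PDEtoDPP} gives
\[
\liminf_{\rho\to0}\mathcal{A}_\rho(x,\phi,\phi(x)+\Lambda(\rho,x))\ge F^*(\dots)\ge0 .
\]
For $\phi$ touching from above, the second line gives
\[
\limsup_{\rho\to0}\mathcal{A}_\rho(x,\phi,\phi(x)+\Lambda(\rho,x))\le F_*(\dots)\le 0 .
\]

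\textbf{Main obstacle.} The only delicate point is matching the quantifiers and test classes. One must check that the touching conditions (strict or non-strict, global or local, equality at $x$) and the admissible $\Lambda$ are literally the same in both definitions. If the PDE definition uses local touching while the scheme needs global ordering $\phi\le u$, I would first modify $\phi$ outside a small ball, using boundedness of $u$, so that it touches globally without changing its derivatives at $x$. I would then verify that this modification stays in $\cont_b^\infty(\RR^N)$.
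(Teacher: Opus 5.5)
Your proposal is correct and is exactly the paper's argument: the precise version, \Cref{thm:weakvisctivisc}, is proved there by the remark that it ``follows from the definitions'', i.e.\ by the same two-line chains of inequalities at a touching point that you write down. The only bookkeeping to adjust concerns two points. First, \Cref{def:weakviscasexo} quantifies over \emph{every} $\Lambda$ with $|\Lambda|\le\omega$. In the first direction you therefore only need the given $\Lambda$ to satisfy $|\Lambda|\le\omega$, while in the converse you run your chain once for each such $\Lambda$, which is why the consistency condition is assumed there for all of them. Second, the precise statement concerns only the interior equation~\eqref{eq:defFsecas}, so there is no boundary condition to match.
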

A more accurate statement of this result, together with the precise notion of viscosity solution of~\eqref{eq:asymptoticmeanvalueformula}, will be given in~\Cref{se:asymptotic}.

\section{Existence of viscosity solutions to the DPP}
\label{se:existencia}

First, we prove that the two definitions of viscosity solution of the DPP are equivalent.

\begin{Proposition}\label{prop-equiv}
The notions of viscosity supersolutions (resp.\@ subsolutions) given in Definitions~\ref{def.clasica} and~\ref{def:DPPviscosity} are equivalent.
\end{Proposition}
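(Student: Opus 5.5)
The statement compares the classical notion (\Cref{def.clasica}), which only makes sense for $u\in\mathcal{X}$, with the viscosity notion (\Cref{def:DPPviscosity}), which is formulated for $u\in\overline{\mathcal{X}}$. I will therefore prove the equivalence on the common domain: for $u\in\mathcal{X}$, $u$ is a classical supersolution (resp.\ subsolution) of~\eqref{eq:BVPDPP} if and only if it is a viscosity supersolution (resp.\ subsolution). The exterior condition $u\geq g$ (resp.\ $u\le g$) in $\RR^N\setminus\Omega$ is identical in both definitions. So only the interior inequality needs work, and I treat supersolutions; subsolutions are symmetric, with all inequalities reversed and $\varphi\ge u$ in place of $\varphi\le u$.

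As a preliminary step, I check that the two forms used in each definition agree, namely
\[
\mathcal{A}(x,\varphi,s)\ge 0 \iff s\ge \mathfrak{a}(x,\varphi)
\qquad\text{for every } \varphi\in\mathcal{X},\ s\in\RR .
\]
\begin{itemize}
\item If $s\ge \mathfrak{a}(x,\varphi)=s_{x,\varphi}$, then assumption~\ref{asA-it:abis} gives $\mathcal{A}(x,\varphi,s)\ge \mathcal{A}(x,\varphi,s_{x,\varphi})=0$.
\item Conversely, suppose $s<s_{x,\varphi}$. By~\ref{asA-it:abis}, $\mathcal{A}(x,\varphi,s)\le 0$. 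If moreover $\mathcal{A}(x,\varphi,s)\ge0$, then $\mathcal{A}(x,\varphi,s)=0$, so $s$ is a second zero of $\theta$, which contradicts the uniqueness in~\ref{asA-it:b}.
\end{itemize}
The analogous equivalence $\mathcal{A}(x,\varphi,s)\le0\iff s\le\mathfrak{a}(x,\varphi)$ follows in the same way. Hence it suffices to work with $\mathcal{A}$ throughout.

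\emph{Classical $\Rightarrow$ viscosity.} Let $u\in\mathcal{X}$ satisfy $\mathcal{A}(x,u,u(x))\ge0$ for $x\in\Omega$, and let $\varphi\in\mathcal{X}$ with $\varphi\le u$. By~\ref{asA-it:a}, $\mathcal{A}$ is nonincreasing in the second variable, so
\[
\mathcal{A}(x,\varphi,u(x))\ \ge\ \mathcal{A}(x,u,u(x))\ \ge\ 0 .
\]
This is exactly the viscosity supersolution inequality. This direction was also noted in \Cref{rem:propaext}.

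\emph{Viscosity $\Rightarrow$ classical.} Since $u\in\mathcal{X}$ and $u\le u$, the function $\varphi=u$ is an admissible test function in \Cref{def:DPPviscosity}. Testing with it gives $\mathcal{A}(x,u,u(x))\ge 0$ for all $x\in\Omega$, which is the classical inequality.

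The only point needing care is this choice of test function. It is admissible precisely because $u\in\mathcal{X}$, which is why the equivalence is stated on $\mathcal{X}$. For $u\in\overline{\mathcal{X}}\setminus\mathcal{X}$, the classical notion is not defined and only the viscosity notion applies. I expect no genuine obstacle beyond the uniqueness argument in the preliminary step.
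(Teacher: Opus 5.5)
You have proved the statement as literally labelled, and that argument is sound. For $u\in\mathcal{X}$ the classical inequality and the test-function inequality coincide: one direction follows from~\ref{asA-it:a}, the other from testing with $\varphi=u$. Your preliminary step also rightly uses the uniqueness in~\ref{asA-it:b}, which the paper's one-line justification (monotonicity in the third variable plus $\mathcal{A}(x,\varphi,\mathfrak{a}(x,\varphi))=0$) leaves implicit.

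However, the reference to \Cref{def.clasica} in the proposition is a typo for \Cref{def.weak}. Three things show this:
\begin{itemize}
\item the statement speaks of the notions of \emph{viscosity} supersolutions in both definitions, while \Cref{def.clasica} defines no viscosity notion;
\item the sentence after \Cref{def.weak} announces that \Cref{prop-equiv} shows that ``both definitions of viscosity solutions are equivalent'';
\item the paper's proof, which also cites \Cref{def.clasica}, immediately writes $u(x)\geq\overline{\mathfrak{a}}(x,u)$, which is \Cref{def.weak}.
\end{itemize}
What must be shown is this: for \emph{every} $u\in\overline{\mathcal{X}}$ and $x\in\Omega$, the test-function condition of \Cref{def:DPPviscosity} holds if and only if $u(x)\geq\overline{\mathfrak{a}}(x,u)$, if and only if $\overline{\mathcal{A}}(x,u,u(x))\geq0$. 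For subsolutions the corresponding conditions are $u(x)\leq\underline{\mathfrak{a}}(x,u)$ and $\underline{\mathcal{A}}(x,u,u(x))\leq0$. Your proof never touches these extended operators. Its key step, testing with $\varphi=u$, is unavailable precisely when $u\notin\mathcal{X}$, which is the situation the framework is built for.

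This is not cosmetic. The Perron supremum of \Cref{thm:perron} need not lie in $\mathcal{X}$, and that proof works throughout with the inequalities of \Cref{def.weak}, both for the elements of $\underline{S}$ and for the supremum itself. By contrast, the proof of \Cref{thm:striccomparison} uses a translate of the strict supersolution as a test function, i.e.\ \Cref{def:DPPviscosity}. Combining the two, as \Cref{se:examples} does to verify~\eqref{H2}, needs the equivalence on all of $\overline{\mathcal{X}}$.

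The missing argument is short. Fix $x\in\Omega$.
\begin{itemize}
\item The real number $u(x)$ is at least $\sup\{\mathfrak{a}(x,\varphi):\varphi\in\mathcal{X},\ \varphi\leq u\}$ if and only if it is at least each $\mathfrak{a}(x,\varphi)$ with $\varphi\in\mathcal{X}$, $\varphi\leq u$.
\item Likewise, $\inf\{\mathcal{A}(x,\varphi,u(x)):\varphi\in\mathcal{X},\ \varphi\leq u\}\geq0$ if and only if every term is nonnegative.
\item Your preliminary equivalence, applied with $s=u(x)$, matches the two families term by term.
\end{itemize}
Subsolutions are handled identically with $\varphi\geq u$ and the roles of supremum and infimum exchanged.
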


\begin{proof}
  We prove the result for viscosity supersolutions. 
The boundary data are the same in both definitions, so we focus on what happens inside~$\Omega$. 
If~$u$ is a viscosity supersolution in the sense of~\Cref{def.clasica}, then for every~$x \in \Omega$ and every~$\varphi\in \mathcal{X}$ such that~$\varphi \leq u$ we have that
\begin{equation}
u(x) \geq \overline{\mathfrak{a}}(x,u)= \sup_{\substack{\psi \in \mathcal{X} \\ \psi \leq u}}
\mathfrak{a}(x,\psi) \geq \mathfrak{a}(x,\varphi).
\end{equation}
Hence,~$u$ is a viscosity supersolution in the sense of~\Cref{def:DPPviscosity}.
Conversely, assume that for all~$x\in \Omega$ we have that 
\begin{equation}
u(x) \geq \mathfrak{a}(x,\varphi),
\end{equation}
for every~$\varphi \in \mathcal{X}$ with~$\varphi \leq u$. Taking the supremum over all such~$\varphi$ yields~$u(x) \geq \overline{\mathfrak{a}}(x,u)$,
which shows that~$u$ is a viscosity supersolution in the sense of~\Cref{def.clasica}. 

Finally, let us verify the equivalence of the notions of viscosity solutions in the formulation given by~$\mathcal{A}$. 
First, observe that for any~$\varphi \in \mathcal{X}$ such that~$\varphi \leq u$, the following statements are equivalent:
\[
u(x) \geq \mathfrak{a}(x,\varphi),
\]
and
\[
\mathcal{A}(x,\varphi,u(x)) \geq 0.
\]
This follows from the fact that ~$\mathcal{A}$ is nondecreasing in its last variable and~$\mathcal{A}(x,\varphi,\mathfrak{a}(x,\varphi))=0$ by definition. 
Moreover, the latter condition is also equivalent to
\[
\overline{\mathcal{A}}(x,u,u(x)) \geq 0,
\]
by the very definition of~$\overline{\mathcal{A}}$ as an infimum over admissible test functions (in the second variable).
\end{proof}

 We now establish the existence of solutions to the DPP stated in~\Cref{teo.1.intro} by means of Perron's method. The solution is given as the supremum of viscosity subsolutions. For the reader's convenience, we recall the assumptions under consideration:
\begin{flalign}
    &\text{there exists at least one viscosity subsolution of~\eqref{eq:BVPDPP},} \tag{H$_1$} \\
    &\text{all viscosity subsolutions of~\eqref{eq:BVPDPP} are uniformly bounded from above.} \tag{H$_2$}
\end{flalign}
The proof under assumptions~\eqref{H1e} and~\eqref{H2e} follows analogously.

\begin{Theorem}[Extended Perron's method]\label{thm:perron}
Let~$\Omega \subset \mathbb{R}^N$ be an open domain, let~$g :\mathbb{R}^N \setminus \Omega \to \RR$ bounded, and let~$
\mathcal{A}\colon \mathbb{R}^N \times \mathcal{X} \times \mathbb{R} \to \mathbb{R}$
satisfy assumptions~\textup{\ref{asA-it:a}--\ref{asA-it:abis}--\ref{asA-it:b}}. 
Assume~\eqref{H1} and~\eqref{H2} (resp.\@~\eqref{H1e} and~\eqref{H2e}).  
Let~$\underline{S}$ (resp.\@~$\overline{S}$) be the set of all viscosity subsolutions (resp.\@ supersolutions) of~\eqref{eq:BVPDPP}. Then,
\begin{equation} \label{capital}
\overline{u} (x) = \begin{cases}
   \displaystyle \sup_{v \in \underline{S}} v(x), \qquad \text{ for } x \in \Omega, \\
    g(x), \qquad \text{ for } x \in \RR^N \setminus \Omega
\end{cases} \quad \left(\text{resp.}  \quad 
\underline{u} (x) = \begin{cases}
   \displaystyle \inf_{v \in \overline{S}} v(x), \qquad \text{ for } x \in \Omega, \\
    g(x), \qquad \text{ for } x \in \RR^N \setminus \Omega
\end{cases}
\right)
\end{equation}
is a viscosity solution of~\eqref{eq:BVPDPP}.
\end{Theorem}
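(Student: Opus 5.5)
We argue for subsolutions (the case \eqref{H1e}--\eqref{H2e} is symmetric). By \eqref{H1} the set $\underline{S}$ is nonempty, and by \eqref{H2} the supremum defining $\overline{u}$ is finite and bounded above. It is bounded below by any fixed element of $\underline{S}$, and outside $\Omega$ it equals the bounded $g$. Hence $\overline{u}\in\overline{\mathcal{X}}$. Every $v\in\underline{S}$ satisfies $v\le g$ outside $\Omega$ and $v\le\overline{u}$ inside, so $v\le\overline{u}$ on all of $\RR^N$. The exterior conditions $\overline{u}\le g$ and $\overline{u}\ge g$ on $\RR^N\setminus\Omega$ hold trivially. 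Throughout we work with the formulation of \Cref{def:DPPviscosity} via $\mathfrak{a}$, which is legitimate by \Cref{prop-equiv}.

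\textbf{Step 1: $\overline{u}$ is a viscosity subsolution.} Fix $x\in\Omega$ and $\varphi\in\mathcal{X}$ with $\varphi\ge\overline{u}$. For every $v\in\underline{S}$ we have $\varphi\ge\overline{u}\ge v$, so the subsolution property of $v$ gives $v(x)\le\mathfrak{a}(x,\varphi)$. Taking the supremum over $v$ yields $\overline{u}(x)\le\mathfrak{a}(x,\varphi)$. In particular $\overline{u}\in\underline{S}$, so $\overline{u}$ is the maximal subsolution. Unlike the PDE case, no upper semicontinuous envelope is needed, because the test condition is global ($\varphi\ge u$ everywhere) and passes directly to suprema.

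\textbf{Step 2: $\overline{u}$ is a viscosity supersolution, by a bump construction.} Suppose not. Then there are $x_0\in\Omega$ and $\varphi\in\mathcal{X}$ with $\varphi\le\overline{u}$ and $\overline{u}(x_0)<\mathfrak{a}(x_0,\varphi)$. Define $w:=\overline{u}$ off $x_0$ and $w(x_0):=\mathfrak{a}(x_0,\varphi)>\overline{u}(x_0)$; then $w\in\overline{\mathcal{X}}$ and $w$ agrees with $g$ outside $\Omega$. We check that $w$ is a viscosity subsolution, which contradicts the maximality of $\overline{u}$.

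Let $\psi\in\mathcal{X}$ with $\psi\ge w$. Since $w\ge\overline{u}$, we also have $\psi\ge\overline{u}$, and Step 1 gives $\overline{u}(y)\le\mathfrak{a}(y,\psi)$ for all $y\in\Omega$. At points $y\ne x_0$ this reads $w(y)\le\mathfrak{a}(y,\psi)$. At $y=x_0$ we use $\psi\ge w\ge\overline{u}\ge\varphi$ and the monotonicity of $\mathfrak{a}$ in its second variable (\Cref{rem:defAa}), obtaining $\mathfrak{a}(x_0,\psi)\ge\mathfrak{a}(x_0,\varphi)=w(x_0)$. Thus $w\in\underline{S}$, so $w\le\overline{u}$, contradicting $w(x_0)>\overline{u}(x_0)$. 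Hence $\overline{u}(x)\ge\mathfrak{a}(x,\varphi)$ for all admissible $\varphi$, and $\overline{u}$ is a supersolution.

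\textbf{Main obstacle.} The delicate point is that the bump must remain testable without ever evaluating $\mathfrak{a}$ on $w$ itself; $w$ may not lie in $\mathcal{X}$, for instance if it fails to be measurable. The viscosity definition avoids this, since only test functions $\psi\in\mathcal{X}$ are used and monotonicity does all the work. One also has to be careful that the final notion requires $u\in\mathcal{X}$ for a ``solution''. We read the statement as asserting that $\overline{u}\in\overline{\mathcal{X}}$ is simultaneously a viscosity sub- and supersolution, which is what the argument above establishes.
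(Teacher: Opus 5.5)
Your proof is correct, but it reaches the supersolution property by a different construction than the paper.

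\textbf{Step 1.} This matches the paper in substance. The paper phrases it through the extension of \Cref{def.weak}: for every $v\in\underline{S}$ one has $v(x)\le\underline{\mathfrak{a}}(x,v)\le\underline{\mathfrak{a}}(x,\overline{u})$, and then takes the supremum.

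\textbf{Step 2.} You use a one-point bump and a contradiction. The paper instead replaces $\overline{u}$ globally by $v:=\underline{\mathfrak{a}}(\cdot,\overline{u})$ in $\Omega$ (with $v:=g$ outside). It notes $\overline{u}\le v$, gets $v\le\underline{\mathfrak{a}}(\cdot,v)$ from monotonicity, and concludes $v\in\underline{S}$, hence $v=\overline{u}$. It then reads off $\overline{u}=\underline{\mathfrak{a}}(\cdot,\overline{u})\ge\overline{\mathfrak{a}}(\cdot,\overline{u})$ from \Cref{rem:propaext}.

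\textbf{What each route gives.} The paper's version yields an extra structural identity: the Perron solution is a fixed point of $\underline{\mathfrak{a}}$, in the spirit of \cite{LiuSchikorra}. Your version makes the competitor trivially admissible. Your $w$ differs from $\overline{u}$ at a single point by a finite amount, so $w\in\overline{\mathcal{X}}$ is immediate.

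The paper's $v$, by contrast, must be known to be finite and bounded before it can belong to $\underline{S}$. The paper asserts this ``by construction''. However, $\underline{\mathfrak{a}}(x,\overline{u})$ is an infimum over the $\varphi\in\mathcal{X}$ lying above $\overline{u}$, and it is not a priori bounded above. This is easily repaired by working with $\min\{v,M\}$ for a constant $M>\sup\overline{u}$, which is a bounded subsolution by the same monotonicity argument.

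\textbf{Other points.} You also work directly with the test-function formulation of \Cref{def:DPPviscosity}, so you never need the extended operators. Your reading of ``viscosity solution'' as an element of $\overline{\mathcal{X}}$ that is both a sub- and a supersolution is exactly what the paper's proof establishes as well.
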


\begin{Remark}
 For a result like the one above to hold in the classical framework, the supremum must belong to the class of functions on which~$\mathcal{A}$ is well defined, namely~$\overline{u} \in \mathcal{X}$. 
This requirement is not satisfied in general. 
In~\cite{LiuSchikorra}, the authors address this difficulty by assuming that~$\mathcal{X}$ is closed under pointwise suprema. 
In contrast, this assumption is not needed here, since~$\overline{u} \in \overline{\mathcal{X}}$ suffices for our definition.
\end{Remark}

\begin{proof} [Proof of~\Cref{thm:perron}]
   We first show that~$\overline{u}$, which is well defined thanks to hypotheses~\eqref{H1} and~\eqref{H2}, is a viscosity subsolution of~\eqref{eq:BVPDPP}. Let~$u \in \underline{\mathcal S}$, which is nonempty by assumption~\eqref{H1}. This implies that~$\overline{u}$ is bounded from below by~$u$ and bounded from above due to assumption~\eqref{H2}. Thus,~$\overline{u}\in \overline{\mathcal{X}}$. Now, 
for every~$x\in\Omega$, since~$u$ is a viscosity subsolution, we have
\[
u(x) \le \underline{\mathfrak{a}}(x,u).
\]
Since~$u\le \overline{u}$ and~$\underline{\mathfrak{a}}$ is nondecreasing in its second argument (see~\Cref{rem:propaext}), it follows that
\[
u(x) \le \underline{\mathfrak{a}}(x,\overline{u}).
\]
Taking the pointwise supremum over all~$u\in\underline{\mathcal S}$ yields
\[
\overline{u}(x) \le \underline{\mathfrak{a}}(x,\overline{u}), \qquad x\in\Omega,
\]
and therefore~$\overline{u}$ is a viscosity subsolution of~\eqref{eq:BVPDPP}.

We now prove that~$\overline{u}$ is also a viscosity supersolution of~\eqref{eq:BVPDPP}.  
Define the function~$v:\RR^N\to\RR$ given by
\[
v(x)=\underline{\mathfrak{a}}(x,\overline{u}), \qquad x\in\Omega \quad \textup{and} \quad v(x)= g(x), \qquad x\in\RR^N\setminus\Omega.
\]
By construction,~$v\in\overline{\mathcal X}$. Moreover, from the previous inequality we already know that
\[
\overline{u}(x) \le v(x).
\]
Using again the monotonicity of~$\underline{\mathfrak{a}}$, we obtain
\[
v(x)=\underline{\mathfrak{a}}(x,\overline{u}) \le \underline{\mathfrak{a}}(x,v),
\]
which shows that~$v$ is a viscosity subsolution of~\eqref{eq:BVPDPP}, that is,~$v\in\underline{\mathcal S}$.  
By definition of~$\overline{u}$ as the pointwise supremum in~$\underline{\mathcal S}$, this implies~$v\le \overline{u}$. Hence,~$v=\overline{u}$.
Finally, using~\Cref{rem:propaext}, we conclude that
\[
\overline{u}(x)=v(x)=\underline{\mathfrak{a}}(x,\overline{u}) \ge \overline{\mathfrak{a}}(x,\overline{u}),
\]
so that~$\overline{u}$ is a viscosity supersolution.  
Together with the boundary condition~$\overline{u}=g$ in~$\mathbb{R}^N\setminus\Omega$, this completes the proof.

\end{proof}
\begin{Remark} 
Notice that for an exterior datum~$g$ that is not measurable, we cannot even consider classical solutions. However, the measurability of~$g$ does not affect at all the previous result of existence for viscosity solutions 
\end{Remark}

\section{Comparison result and weak stability for the DPP} \label{se:comparison}

The aim of this section is to provide tools to establish a comparison result, and hence stability properties, for viscosity solutions of the DPP.

In general, without additional assumptions on the DPP one cannot expect a comparison result to hold.
Nevertheless, under very mild and natural assumptions on~$\mathcal{A}$, it is still possible to compare viscosity subsolutions (resp.\@ supersolutions) with classical strict supersolutions (resp.\@ subsolutions).

Recall that we assume the following extra conditions:

\begin{enumerate}
\item\label{visesta-item1} \textup{\textbf{Stability of~$\mathcal{X}$ by translations}}. We have~$\mathcal{X}+c \subset \mathcal{X}$, in the sense that if~$\varphi \in \mathcal{X}$, then~$\varphi + c \in \mathcal{X}$ for any~$c \in \RR$.
\item\label{visesta-item2} \textup{\textbf{Translation invariance}}. Given~$\phi \in \mathcal{X}$ and~$C \in \RR$, we have
\[
\mathcal{A}(x,\phi+C,\phi(x)+C)=\mathcal{A}(x,\phi,\phi(x)).
\]
\item\label{visesta-item3} \textup{\textbf{Continuity in the third variable}}. Given~$\phi \in \mathcal{X}$, there exists a modulus of continuity~$\Lambda$ such that
\[
|\mathcal{A}(x,\phi,\phi(x))-\mathcal{A}(x,\phi,\phi(x)-\varepsilon)|\leq \Lambda(\varepsilon)
\quad \textup{as} \quad \varepsilon \to 0^+ \quad \textup{for all} \quad x \in \Omega.
\]
\end{enumerate}
\normalcolor

\begin{proof}[Proof of~\Cref{thm:striccomparison}]
We do it only for the classical strict supersolution case. Assume, by contradiction, that 
\[
M \coloneqq \sup_{\RR^N}(v-\overline{u})>0.
\]
Due to the ordering in~$\RR^N \setminus \Omega$, we have
\[
M = \sup_{\Omega}(v-\overline{u}).
\]
By definition of the supremum, there exists a sequence~$\{x_\varepsilon\}_{\varepsilon>0} \subset \Omega$ such that for all~$\varepsilon>0$ we have
\[
M-\varepsilon \leq v(x_\varepsilon)-\overline{u}(x_\varepsilon)\leq M.
\]
Finally, note that~$v \leq \overline{u}+M \in \mathcal{X}$ by~\eqref{visesta-item1}. Thus,~$\overline{u}+M$ can be used as a test function in the definition of viscosity subsolution of the DPP. Then,
\begin{align}
0 &\geq \mathcal{A}(x_\varepsilon,\overline{u}+M,v(x_\varepsilon)) \\
&\geq \mathcal{A}(x_\varepsilon,\overline{u}+M,\overline{u}(x_\varepsilon)+M-\varepsilon) \\
&\stackrel{\eqref{visesta-item2}}{=} \mathcal{A}(x_\varepsilon,\overline{u},\overline{u}(x_\varepsilon)-\varepsilon) \\
&\stackrel{\eqref{visesta-item3}}{\geq} \mathcal{A}(x_\varepsilon,\overline{u},\overline{u}(x_\varepsilon)) - \Lambda(\varepsilon) \\
&\geq \delta - \Lambda(\varepsilon).
\end{align}
This is a contradiction since we can take~$\varepsilon>0$ small enough so that~$\delta - \Lambda(\varepsilon)>0$.
\end{proof}

\begin{Remark}
Note that the above result ensures~\eqref{H2} and~\eqref{H2e} as long as one can construct a classical strict supersolution and a classical strict subsolution respectively. Additionally, since classical sub- and supersolutions are viscosity sub- and supersolutions, ~\eqref{H1} and~\eqref{H1e} are also satisfied.

A possible simple strategy is the following: find a smooth function~$\phi$ satisfying
\begin{equation}
\begin{cases}
F\bigl(x,\phi(x),\nabla \phi(x),D^2\phi(x)\bigr)\geq1, & x \in \Omega,\\
\phi(x)\geq \displaystyle\sup_{\RR^N\setminus\Omega} g, & x \in \RR^N\setminus\Omega.
\end{cases}
\end{equation}
 Then, by the following consistency assumption (which is weaker than the ones typically needed for other results in the paper),
\[
\sup_{x\in \Omega}\left|F\bigl(x,\phi(x),\nabla \phi(x),D^2\phi(x)\bigr)- \mathcal{A}_\rho(x,\phi,\phi(x))\right|
\leq \|\phi\|_{C^{k}(\RR^N)}\, \overline{\Lambda}(\rho),
\]
for some modulus of continuity~$\overline{\Lambda}$, we can choose~$\rho>0$ small enough so that~$\phi$ satisfies
\[
\mathcal{A}_\rho(x,\phi,\phi(x)) \geq \frac{1}{2} \quad \textup{in } \Omega.
\]
\end{Remark}

Even under the previous conditions~\eqref{visesta-item1}–\eqref{visesta-item2}–\eqref{visesta-item3}, the comparison principle or even a uniqueness result will not hold, as we show in the following examples.

\begin{Example}[No uniqueness nor comparison]
    Let us define~$\mathfrak{a}:B_1(0) \times \overline{\mathcal{X}} \to \RR$ as 
    \begin{equation}
        \mathfrak{a}(x,\varphi) = \varphi(0),
    \end{equation}
    and also~$\mathcal{A}: B_1(0) \times \overline{\mathcal{X}} \times \RR \to \RR$ in the standard way, 
    \begin{equation}
        \mathcal{A}(x, \varphi, s) = s -\mathfrak{a}(x,\varphi) \eqqcolon s - \varphi(0).
    \end{equation}
    Choose the exterior datum~$g \equiv 0$ in~$\RR^N \setminus B_1(0)$. In this example, it is natural that~$\mathcal{X} = \overline{\mathcal{X}}$ since there is no measurability involved. Observe that the defined~$\mathcal{A}$ satisfies~\textup{\ref{asA-it:a}--\ref{asA-it:abis}--\ref{asA-it:b}}, ~\eqref{visesta-item1}–\eqref{visesta-item2}–\eqref{visesta-item3} and is even linear. However, there is no comparison principle nor uniqueness for the corresponding DPP equation~$\mathcal{A}(x, u , u(x) )= 0$, even though we are using classical solutions, since any function defined as 
    \begin{equation}
        u(x) = \begin{cases}
                   \displaystyle C, \qquad \text{ for } x \in B_1(0), \\
                    0, \qquad \text{ for } x \in \RR^N \setminus B_1(0)
                \end{cases}
    \end{equation}
    is a solution for any~$C \in \RR$ (and these are the only solutions). 
\end{Example}

\begin{Example}[Uniqueness but no comparison]
    If in the prior example we consider~$\mathcal{X}$ as the set of continuous functions on~$\RR^N$, notice that the only solution is~$u \equiv 0$. However, even in this restricted setting there is no comparison principle between continuous super- and subsolution. Define
    \begin{equation}
        u(x) = \begin{cases}
           \displaystyle 1-|x|^2, \qquad \text{ for } x \in B_1(0), \\
            0, \qquad \text{ for } x \in \RR^N \setminus B_1(0),
        \end{cases} \qquad
        v(x) = \begin{cases}
           \displaystyle |x|^2-1, \qquad \text{ for } x \in B_1(0), \\
            0, \qquad \text{ for } x \in \RR^N \setminus B_1(0).
        \end{cases}
    \end{equation}
    Then,~$u$ is a continuous subsolution and~$v$ a continuous supersolution, but~$u \geq v$ in the whole~$\RR^N$. 
\end{Example}

This example is degenerate, in the sense that the DPP operator~$\mathcal{A}(x, \varphi, s)$ does not see the exterior datum~$g$ for any point~$x \in \Omega$, and thus cannot translate the ordering in~$\RR^N \setminus B_1(0)$ to the interior properly, not even when restricting ourselves to the set of continuous functions. 

\section{Convergence of the solutions to the DPP towards the 
solution to a PDE} \label{se:BS}

We consider the boundary value problem
\begin{equation} \label{eq:BVPPDE-sectionproof} 
\begin{cases}
    F\left(x, u(x), \nabla u(x), D^2u(x) \right) = 0, \quad &x\in \Omega,\\
    u(x)=g(x),  \quad &x\in \partial \Omega.
\end{cases}
\end{equation} 

Let us briefly recall the notions of upper semi-continuous and lower semi-continuous envelopes of a function~$z:C\to \RR$ where~$C$ is any subset of~$\RR^d$. They are functions~$z^*, z_* \colon C \to \RR$ defined as
\[
z^*(x)=\limsup_{y\to x\, , \, y \in C} z(y) \quad \textup{and} \quad z_*(x)=\liminf_{y\to x\, , \, y \in C} z(y).
\]

We can define now the concept of viscosity solution.
\begin{Definition}\label{def:viscosityF}
Let~$\Omega$ be an open domain, let~$F:\Omega\times \RR\times \RR^N\times S^N$ be elliptic and proper, and let~$g\in \overline{\mathcal{X}}$.  
A function~$u\in\overline{\mathcal{X}}$ is a viscosity subsolution (resp.\@ supersolution) of~\eqref{eq:BVPPDE-sectionproof} if for all~$\varphi\in C^\infty_b(\RR^N)$ and all~$x\in \overline{\Omega}$ such that~$u^*-\varphi$ (resp.\@~$u_*-\varphi$) has a global maximum (resp.\@ minimum) at~$x$, we have
\begin{align}
    &F_*(x,u^*(x),\nabla \varphi(x),D^2\varphi(x))\leq 0  \quad \textup{if} \quad x\in \Omega,\\
    (\textup{resp.\@ } &F^*(x,u_*(x),\nabla \varphi(x),D^2\varphi(x))\geq 0),
\end{align}
and
\begin{align}
    &\min\{u^*(x) - g^*(x),\,F_*(x,u^*(x),\nabla \varphi(x),D^2\varphi(x)) \}\leq 0 \quad \textup{if} \quad x\in \partial\Omega,\\
    (\textup{resp.\@ } &\max\{u_*(x) - g_*(x),\,F^*(x,u_*(x),\nabla \varphi(x),D^2\varphi(x))\}\geq 0).
\end{align} 
\end{Definition}

We are now ready to prove the convergence results.
\begin{proof}[Proof of~\Cref{teo.convergencia.intro.bis}]
We consider the case of~$\overline{u}$. Since~$\overline{u}$ is upper semicontinuous, we have~$\overline{u}=\overline{u}^*$. 
Let~$x_0$ be a maximum point of~$\overline{u}-\phi$ on~$\overline{\Omega}$ for some~$\phi \in C_b^\infty(\RR^N)$. 
We assume, without loss of generality, that~$x_0$ is a strict global maximum and that~$\overline{u}(x_0)=\phi(x_0)$. Furthermore, fix~$r>0$ and choose~$\phi$ so that~$\phi(x)\geq 2\sup_\rho \|u_\rho\|_\infty$ for all~$x\notin B_r(x_0)$. 
Then there exist sequences~$\{\rho_n\}_{n\in\NN}$ and~$\{y_n\}_{n\in\NN}$ such that
\[
(\rho_n,y_n)\to (0^+,x_0) 
\quad \textup{and} \quad 
u_{\rho_n}(y_n)\to \overline{u}(x_0)
\quad \textup{as } n\to+\infty.
\]
Moreover, passing to a subsequence if necessary, we may assume that for every~$n$ the point~$y_n$ is chosen sufficiently close to the global supremum of~$u_{\rho_n}-\phi$ so that, for any given modulus of continuity~$\omega$, we have
\begin{equation}
\xi_n \coloneqq u_{\rho_n}(y_n)-\phi(y_n)+\omega(\rho_n)
>
\sup_{\overline{B}_r(x_0)} \{u_{\rho_n}-\phi\}.
\end{equation}
It follows that~$\lim_{n\to\infty} \xi_n = 0$ and
\begin{equation}
u_{\rho_n}(x) \leq \phi(x)+\xi_n 
\quad \text{for all } x\in \overline{\Omega}.
\end{equation}

At this point, a slight difference with the original proof in~\cite{BarlesSouganidis} appears. 
Since~$u_{\rho_n}$ is a viscosity solution of the DPP (and in particular a viscosity subsolution), for every~$y_n\in \Omega$ we have
\begin{equation}
u_{\rho_n}(y_n) 
\leq \underline{\mathfrak{a}}_{\rho_n}(y_n,u_{\rho_n})
\leq \underline{\mathfrak{a}}_{\rho_n}(y_n,\phi+\xi_n)
= \mathfrak{a}_{\rho_n}(y_n,\phi+\xi_n),
\end{equation}
where we used that~$\underline{\mathfrak{a}}_{\rho_n}$ is nondecreasing in its second variable and that~$\phi+\xi_n\in C_b^\infty(\RR^N)\subset \mathcal{X}$, so that~$
\underline{\mathfrak{a}}_{\rho_n}(y_n,\phi+\xi_n)
=\overline{\mathfrak{a}}_{\rho_n}(y_n,\phi+\xi_n)
=\mathfrak{a}_{\rho_n}(y_n,\phi+\xi_n)$.
Thus,
\begin{equation}
\phi(y_n)+\xi_n-\omega(\rho_n)
\leq \mathfrak{a}_{\rho_n}(y_n,\phi+\xi_n).
\end{equation}
Thus, 
if~$y_n\in \Omega$, this implies
\begin{align}
\mathcal{B}_{\rho_n}(y_n,\phi+\xi_n,\phi(y_n)+\xi_n-\omega(\rho_n))
&=\mathcal{A}_{\rho_n}(y_n,\phi+\xi_n,\phi(y_n)+\xi_n-\omega(\rho_n)) \\
&\leq \mathcal{A}_{\rho_n}(y_n,\phi+\xi_n,
\mathfrak{a}_{\rho_n}(y_n,\phi+\xi_n))=0.
\end{align}
On the other hand, if~$y_n\in \RR^N\setminus\Omega$, then
\begin{align}
\mathcal{B}_{\rho_n}(y_n,\phi+\xi_n,\phi(y_n)+\xi_n-\omega(\rho_n))
&=\phi(y_n)+\xi_n-\omega(\rho_n)-g(y_n)\\
&=u_{\rho_n}(y_n)-g(y_n)=0.
\end{align}

Finally, using the consistency assumption, we obtain
\begin{align}
0
&\geq \liminf_{n\to+\infty}
\mathcal{B}_{\rho_n}(y_n,\phi+\xi_n,\phi(y_n)+\xi_n-\omega(\rho_n))\\
&\geq \liminf_{\rho\to0^+,\,y\to x_0,\,\xi\to0}
\mathcal{B}_\rho(y,\phi+\xi,\phi(y)+\xi-\omega(\rho))\\
&\geq
\begin{cases}
F_*(x_0,\phi(x_0),\nabla\phi(x_0),D^2\phi(x_0)),
& \textup{if } x_0\in \Omega,\\[1mm]
\min\{\phi(x_0)-g^*(x_0),
F_*(x_0,\phi(x_0),\nabla\phi(x_0),D^2\phi(x_0))\},
& \textup{if } x_0\in \partial\Omega.
\end{cases}
\end{align}
Since~$\overline{u}^*(x_0)=\overline{u}(x_0)=\phi(x_0)$, this shows that~$\overline{u}$ is an upper semicontinuous viscosity subsolution.
\end{proof}

\begin{proof}[Proof of~\Cref{cor:convSchm}]
By construction, we have~$\underline{u}\leq \overline{u}$. 
By~\Cref{teo.convergencia.intro.bis},~$\overline{u}$ is an upper semicontinuous viscosity subsolution and~$\underline{u}$ is a lower semicontinuous viscosity supersolution of problem~\eqref{eq:BVPPDE}. 
Thus, by the strong uniqueness property, we obtain~$\underline{u}\geq \overline{u}$, and therefore
\[
u \coloneqq\underline{u}=\overline{u},
\]
where~$u$ is the unique viscosity solution of~\eqref{eq:BVPPDE}. Pointwise convergence follows since
\[
\limsup_{\rho\to0^+} u_{\rho}(x)
\leq \limsup_{\substack{\rho\to0^+\\ y\to x}} u_{\rho}(y)
= u(x)
= \liminf_{\substack{\rho\to0^+\\ y\to x}} u_{\rho}(y)
\leq \liminf_{\rho\to0^+} u_{\rho}(x).
\]
Finally, locally uniform convergence follows by standard arguments; see, for instance, Lemma~4.1 in~\cite{BarlesBook}.
\end{proof}

\section{Asymptotic mean value formulas in the viscosity sense} \label{se:asymptotic}

Recall that the goal of this section is to understand the equivalence between being a viscosity solution of the PDE
\begin{equation}
F\bigl(x,u(x),\nabla u(x),D^2u(x)\bigr)=0,
\end{equation}
and being a viscosity solution of the asymptotic mean value property
\begin{equation} \label{as:exp}
    \mathcal{A}_\rho(x,u,u(x)+ o(\rho))=0, 
    \quad \left( \textup{or} \quad u(x)=\mathfrak{a}_\rho(x,u)-o(\rho)\right),
    \quad \textup{as } \rho\to 0^+.
\end{equation}
However, we consider that the above formal definition of solution to the asymptotic mean value formula allows for several possible interpretations.

We start the section by defining below what we consider the most natural concept of viscosity solution to the asymptotic mean value formula~\eqref{as:exp}.

\begin{Definition}\label{def:weakviscasexo}
    Let~$\Omega \subset \mathbb{R}^N$ be an open domain,~$C_b^\infty(\RR^N)\subset \mathcal{X}$, let~$\mathcal{A}_\rho\colon \Omega \times \mathcal{X} \times \mathbb{R} \to \mathbb{R}$ satisfy assumptions~\textup{\ref{asA-it:a}--\ref{asA-it:abis}--\ref{asA-it:b}} for each~$\rho>0$, and let~$\omega \coloneq \RR_+ \times \Omega \to \RR_+$ be a function such that~$\lim_{\rho \to 0}\omega(\rho,x)=0$ for all~$x\in \Omega$.
    
 A function~$u\in \overline{\mathcal{X}}$ is a \emph{viscosity subsolution} associated to~$\omega$ of~\eqref{as:exp}  if, for any function~$\Lambda: \RR_+ \times \Omega \to \RR$ satisfying~$|\Lambda(\rho,x)| \leq \omega(\rho,x)$, and for any~$\varphi \in C_b^2(\RR^N)$ satisfying~$u^* \leq \varphi$,~$u^*(x_0) = \varphi(x_0)$, we have that
   \begin{equation} \label{eq:weakvisc1}
        \limsup_{\rho\to0}\mathcal{A_\rho}(x_0,\varphi, \varphi(x_0)+\Lambda(\rho, x_0))\leq 0.
    \end{equation}

   A function~$u\in \overline{\mathcal{X}}$ is a \emph{viscosity supersolution} associated to~$\omega$ of~\eqref{as:exp}  if, for any function~$\Lambda: \RR_+ \times \Omega \to \RR$ satisfying~$|\Lambda(\rho,x)| \leq \omega(\rho,x)$, and for any~$\varphi \in C_b^2(\RR^N)$ satisfying~$u_* \geq \varphi$,~$u_*(x_0) = \varphi(x_0)$, we have that
    \begin{equation} \label{eq:weakvisc2}
        \liminf_{\rho\to0}\mathcal{A_\rho}(x_0,\varphi, \varphi(x_0)+\Lambda(\rho, x_0))\geq 0.
    \end{equation}

    A function~$u\in \overline{\mathcal{X}}$ is a \emph{viscosity solution} associated to~$\omega$ of~\eqref{as:exp} in~$x_0 \in \Omega$  if it is both a  viscosity supersolution and a  viscosity subsolution in~$x_0 \in \Omega$ associated to~$\omega$. 
\end{Definition}

\begin{Remark}
    In the previous definition, one may consider the appearance of~$\Lambda$ in the last variable of the limits~\eqref{eq:weakvisc1} and~\eqref{eq:weakvisc2} unnecessary, and thus the hypothesis involving the existence of~$\omega$ not required. This concern is justified when restricting ourselves to the case where~$\mathfrak{a}$ is given explicitly. This alternative definition can be recovered considering~$\omega=0$, however we keep~$\omega$ to allow for more generality, specifically for the implicit cases.
\end{Remark}
 
\normalcolor

Our following result shows that, under certain consistency conditions,~$u$ a viscosity solution to~\eqref{as:exp} if and only if it is a viscosity solution to 
\begin{equation}\label{eq:defFsecas}
     F\left(x, u(x), \nabla u(x), D^2u(x) \right) = 0, \quad x\in \Omega.
\end{equation}
\begin{Theorem}[Asymptotic expansion~$\Longleftrightarrow$  PDE]\label{thm:weakvisctivisc}  Let~$\Omega \subset \mathbb{R}^N$ be an open domain,~$C_b^\infty(\RR^N)\subset \mathcal{X}$, and~$\mathcal{A}_\rho\colon \mathbb{R}^N \times \mathcal{X} \times \mathbb{R} \to \mathbb{R}$
satisfy assumptions~\textup{\ref{asA-it:a}--\ref{asA-it:abis}--\ref{asA-it:b}} for each~$\rho>0$. The following holds:
\begin{enumerate}[label={\noindent \rm (\alph*)}]
    \item\label{thm:weakvisctivisc-item1} Assume the following consistency condition: For any~$\phi \in \cont^\infty_b(\RR^N)$ and any~$x\in \Omega$ we have that
\begin{equation}
\begin{aligned}
      &\liminf_{\rho \to 0} \mathcal{A}_\rho(x,\phi, \phi(x) + \Lambda(\rho,x) ) \leq F^*(x, \phi(x), \nabla\phi(x), D^2\phi(x)), \\
      \big(\textup{resp.} \quad  &\limsup_{\rho \to 0} \mathcal{A}_\rho(x,\phi, \phi(x) + \Lambda(\rho,x) ) \geq F_*(x, \phi(x), \nabla\phi(x), D^2\phi(x))\big),
      \end{aligned}
\end{equation}         
    for some function~$\Lambda: \RR_+ \times \Omega \to \RR$. Then, if~$u$ is a viscosity supersolution associated to~$\omega$ (resp.\@ subsolution)   as in~\Cref{def:weakviscasexo}, and~$|\Lambda| \leq \omega$, it is also a viscosity supersolution (resp.\@ subsolution) of~\eqref{eq:defFsecas}.
     \item\label{thm:weakvisctivisc-item2}  Assume the following consistency condition: For any~$\phi \in \cont^\infty_b(\RR^N)$ and any~$x\in \Omega$ we have that
\begin{equation}
\begin{aligned}
      &\liminf_{\rho \to 0} \mathcal{A}_\rho(x,\phi, \phi(x) + \Lambda(\rho,x) ) \geq F^*(x, \phi(x), \nabla\phi(x), D^2\phi(x)), \\
      \big(\textup{resp.} \quad  &\limsup_{\rho \to 0} \mathcal{A}_\rho(x,\phi, \phi(x) + \Lambda(\rho,x) ) \leq F_*(x, \phi(x), \nabla\phi(x), D^2\phi(x))\big),
      \end{aligned}
\end{equation}         
    for any~$\Lambda$ such that~$|\Lambda|\leq \omega$ for some~$\omega$ as in~\Cref{def:weakviscasexo}. Then, if~$u$ is a viscosity supersolution (resp.\@ subsolution) of~\eqref{eq:defFsecas}, it is also a  viscosity supersolution associated to~$\omega$ (resp.\@ subsolution) as in~\Cref{def:weakviscasexo}.
\end{enumerate} 
\end{Theorem}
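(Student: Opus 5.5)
Both parts follow by chaining the definitions with the consistency inequalities. The only real work is matching the classes of test functions and the normalisation $u_*(x_0)=\varphi(x_0)$.

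For \ref{thm:weakvisctivisc-item1}, take the supersolution case. Let $u$ be a viscosity supersolution associated to $\omega$ in the sense of \Cref{def:weakviscasexo}. Let $\phi\in C_b^\infty(\RR^N)$ be such that $u_*-\phi$ has a global minimum at $x_0\in\Omega$. Since constants belong to $C_b^\infty(\RR^N)$, replacing $\phi$ by $\phi+(u_*(x_0)-\phi(x_0))$ lets us assume $\phi\le u_*$ and $\phi(x_0)=u_*(x_0)$. This changes neither $\nabla\phi(x_0)$ nor $D^2\phi(x_0)$, and $\phi$ is now an admissible test function in \Cref{def:weakviscasexo}, because $C_b^\infty\subset C_b^2$. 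Taking the function $\Lambda$ from the consistency hypothesis, which satisfies $|\Lambda|\le\omega$, the definition gives
\[
0\le \liminf_{\rho\to0}\mathcal{A}_\rho(x_0,\phi,\phi(x_0)+\Lambda(\rho,x_0))\le F^*(x_0,\phi(x_0),\nabla\phi(x_0),D^2\phi(x_0)).
\]
Since $\phi(x_0)=u_*(x_0)$, this is exactly the supersolution inequality of \Cref{def:viscosityF} at interior points. The subsolution case is symmetric: we use \eqref{eq:weakvisc1} with $\limsup\le0$ and the lower bound $\limsup\ge F_*$, and obtain $F_*(x_0,u^*(x_0),\nabla\phi(x_0),D^2\phi(x_0))\le0$.

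For \ref{thm:weakvisctivisc-item2} the chain runs in reverse. Let $u$ be a PDE supersolution, let $\varphi$ satisfy $\varphi\le u_*$ with $\varphi(x_0)=u_*(x_0)$, and let $\Lambda$ be arbitrary with $|\Lambda|\le\omega$. The function $u_*-\varphi$ then attains a global minimum at $x_0$, so \Cref{def:viscosityF} gives $F^*(x_0,u_*(x_0),\nabla\varphi(x_0),D^2\varphi(x_0))\ge0$. The consistency assumption, which now holds for every such $\Lambda$, yields
\[
\liminf_{\rho\to0}\mathcal{A}_\rho(x_0,\varphi,\varphi(x_0)+\Lambda(\rho,x_0))\ge F^*(x_0,\varphi(x_0),\nabla\varphi(x_0),D^2\varphi(x_0))\ge 0,
\]
which is \eqref{eq:weakvisc2}. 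The subsolution case is again symmetric.

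The main obstacle is in \ref{thm:weakvisctivisc-item2}. \Cref{def:weakviscasexo} allows $\varphi\in C_b^2$, whereas both \Cref{def:viscosityF} and the consistency hypothesis are stated only for $C_b^\infty$ test functions. Regularising $\varphi$ does not obviously help. By the monotonicity \ref{asA-it:a}, the favourable direction is a smooth $\psi\ge\varphi$ touching at $x_0$, but such a $\psi$ forces $D^2\psi(x_0)\ge D^2\varphi(x_0)$, which, by ellipticity, is the unfavourable direction for $F^*$. Moreover, one cannot in general match the $2$-jet exactly with a $C^\infty$ majorant. I would therefore first check whether the consistency inequality extends to $C_b^2$ test functions, as it does for the concrete averaging schemes, whose expansions only use second-order Taylor formulas. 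Alternatively, I would read the test class in \Cref{def:weakviscasexo} as $C_b^\infty$, consistent with \Cref{rem:restrictedTest}; with either choice, the chain above closes. The standard fact that the PDE definition is unchanged when $C^\infty$ tests are replaced by $C^2$ ones handles the PDE side.
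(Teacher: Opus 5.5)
This is the paper's argument: the authors only say the theorem follows directly from the definitions. Your chain is exactly that argument written out: shift by a constant so the test function touches $u_*$ (resp.\ $u^*$) at $x_0$, then sandwich the $\liminf$ (resp.\ $\limsup$) of $\mathcal{A}_\rho$ between $0$ and $F^*$ (resp.\ $F_*$).

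The $C_b^2$ versus $C_b^\infty$ mismatch you flag in part (b) is genuine, and the paper passes over it. Only $C_b^\infty\subset\mathcal{X}$ is assumed, so $\mathcal{A}_\rho(x_0,\varphi,\cdot)$ need not even be defined for $\varphi\in C_b^2$, and reading the test class as $C_b^\infty$ is the right fix. Your dismissed regularisation does close it when $C_b^2\subset\mathcal{X}$ and $F$ is continuous: a $C_b^\infty$ majorant $\psi_\varepsilon\geq\varphi$ touching at $x_0$ with $D^2\psi_\varepsilon(x_0)=D^2\varphi(x_0)+\varepsilon I$ gives, via \ref{asA-it:a} and consistency, the lower bound $F(x_0,\varphi(x_0),\nabla\varphi(x_0),D^2\varphi(x_0)+\varepsilon I)$. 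This tends to a nonnegative limit as $\varepsilon\to0$, so the obstruction only arises when $F^*$ jumps in the Hessian variable.
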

\begin{proof}
    The proof is straightforward and follows from the definitions.
\end{proof}

\section{Extensions} \label{se:extensions}

Let us show some natural extensions to the theory developed throughout the paper.

\subsection{Parabolic problems.} 
Our results can be trivially extended to cover parabolic equations of the form
$$u_t(\hat{x},t) + F(\hat{x},t,u(\hat{x},t),\nabla_x u(\hat{x},t), D^2u(\hat{x},t) )=0$$
for~$(\hat{x},t)\in\widehat{\Omega}\times (0,\infty)$,  with initial data~$u(\widehat{x},0)=u_0(\widehat{x})$ and boundary conditions~$u(\widehat{x},t)=\hat{g}(\widehat{x})$ for~$\hat{x}\in\partial\widehat{\Omega}$ and~$t\in (0,\infty)$. This lies directly in our framework by taking~$x\coloneqq(\hat{x},t)$ and~$\Omega\coloneqq\widehat{\Omega}\times (0,\infty)$, considering the boundary condition
\[
g(x)\coloneqq
\begin{cases}
    u_0(\hat{x}) \quad &\textup{if} \quad x\in \widehat{\Omega}\times\{0\},\\
    \hat{g}(\hat{x}) \quad &\textup{if} \quad x\in \partial \widehat{\Omega}\times(0,\infty),
\end{cases}
\]
and adopting the terminology~$\nabla u(x,t)=(\nabla_x u(x,t), \partial_t(x,t))$.
In this case we have to deal with equations of the form
$$u_\rho (\hat{x},t) = {\mathfrak{a}}_\rho (\hat{x},t,u_\rho)$$ in~$\widehat{\Omega} \times (0,\infty)$ with exterior values~$u_\rho (\widehat{x},t) = \hat{g} (\hat{x})$
in~$\RR^N\setminus \Omega = (\RR^N \setminus \widehat{\Omega}) \times (0,\infty)$ and initial condition~$u_\rho (x,t) = u_0 (x)$ for~$x\in \Omega$,~$t\leq 0$.
For previous references on mean value formulas for parabolic problems we refer to~\cite{[Blanc and Rossi 2019],KohnSerfatyCurvature,KohnSerfatyFullyNonlinear,AsymptoticMeanValueTugofwar}.

We will discuss an example of this type in~\Cref{se:examples}.

\subsection{Fractional/nonlocal problems. } We can also think of equations with a nonlocal term. More precisely, one can consider exterior boundary problems of the form
\begin{equation} \label{eq:BVPPDEnolocal} 
\begin{cases}
    F\left(x, u(x), \nabla u(x), D^2u(x), \mathcal{I}u(x) \right) = 0, \quad &x\in \Omega,\\
    u(x)=g(x),  \quad &x\in \RR^N\setminus \Omega,
\end{cases}
\end{equation}
where~$\mathcal{I}$ is nonlocal operator with the right monotonicity properties and~$F: \Omega\times \RR \times \RR^N \times S^{N} \times \RR$ is nondecreasing in the last variable. The class of admissible nonlocal operators is wide and we will discuss some examples in the next section. If we restrict ourselves to symmetric linear operators~$F=\mathcal{I}$, a natural class that one could consider is the class of the so-called Lévy operators, defined for a smooth function~$\phi$ by
\[
\mathcal{I}\phi(x)\coloneqq \textup{P.V.} \int_{|z|>0} (\phi(x)-\phi(x+z)) d\mu(z),
\]
where~$\mu$ is a nonnegative Radon measure in~$\RR^N\setminus\{0\}$ such that
\[
\int_{\RR^N} \min\{|z|^2,1\}d\mu(z)<+\infty.
\]
A simple consistent approximation of~$\mathcal{I}$ is given by
\[
\mathcal{A}_\rho(x,\phi,\phi(x))= \int_{|z|>f_\mu(\rho)} (\phi(x)-\phi(x+z)) d\mu(z),
\]
for some nondecreasing~$f_\mu$ such that~$f_\mu(0)=0$ and is strictly increasing in a neighborhood of~$0$. Note that once we have removed the set where~$\mu$ is singular, we can rewrite
\[
\mathcal{A}_\rho(x,\phi,\phi(x))=\mu(|z|> f_\mu(\rho)) \left( \phi(x) - \mathfrak{a}_\rho(x,\phi) \right) \quad \textup{with} \quad \mathfrak{a}_\rho(x,\phi)\coloneqq\dashint_{|z|> f_\mu(\rho)} \phi(x+z)\,d\mu(z).
\]
This lies precisely in the framework and notation of this paper by choosing~$f_\mu$ such that 
\[
\rho= \frac{1}{\mu(|z|> f_\mu(\rho))}.
\]

We could also consider nonlocal quasilinear operators, as we will discuss later in~\Cref{se:examples}.

\subsection{Spaces more general that~$\RR^N$}

As we already mentioned in~\Cref{obs:RN}, there is no reason to restrict ourselves to working on~$\RR^N$, since many of our results hold on more general spaces such as metric spaces, see~\cite{LiuSchikorra}. This choice was meant to keep the presentation simple. For sections~\ref{se:existencia} and~\ref{se:comparison} where no PDE is yet involved, the adaptation is trivial. 

\section{Examples} \label{se:examples}

In this section we will always assume that~$\mathcal{X}$ is the set of bounded measurable functions in~$\RR^N$.

 \subsection{The Laplacian.} 

 Our starting point is the boundary value problem
\begin{equation}\label{eq:NHLaplaceProb}
\begin{cases}
-\Delta u(x) - f(x) = 0, & x \in \Omega, \\
u(x) = g(x), & x \in \RR^N \setminus \Omega,
\end{cases}
\end{equation}
where~$f,g \in \overline{\mathcal{X}}$ and~$\Omega \subset \RR^N$ is an open, bounded domain.
It is a classical and standard computation that for~$\phi \in C^4_b(\RR^N)$ we have the following consistent approximation of the Laplacian, valid for all~$\rho \in (0,1)$,
\begin{equation}\label{eq:consStrongLap}
-\Delta \phi(x) - f(x)
= \frac{2(N+2)}{\rho}
\left(\phi(x) - \dashint_{B_{\sqrt{\rho}}(x)} \phi(y)\, dy\right)
- f(x) + \Lambda(x,\rho),
\end{equation}
with
\begin{equation}\label{eq:consitencyLap}
\sup_{x\in \Omega} |\Lambda(x,\rho)|
\leq c_{N} \|D^4 \phi\|_{L^\infty(\Omega_1)} \rho,
\end{equation}
where~$\Omega_1 \coloneqq \{x\in \RR^N : \textup{dist}(x,\Omega)<1\}$. In the notation of this paper, this corresponds to
\[
\mathcal{A}_{\rho}(x,\phi,\phi(x))
= \frac{2(N+2)}{\rho}
\left(\phi(x) - \dashint_{B_{\sqrt{\rho}}(x)} \phi(y)\, dy\right)
- f(x),
\]
and
\[
\mathfrak{a}_\rho(x,\phi)
= \dashint_{B_{\sqrt{\rho}}(x)} \phi(y)\, dy+\frac{\rho}{2(N+2)}f(x).
\]
Clearly,~$\mathcal{A}_\rho$ satisfies the structural assumptions
\textup{\ref{asA-it:a}--\ref{asA-it:abis}--\ref{asA-it:b}}.

To verify the existence of viscosity solutions of the DPP, we check hypotheses
\eqref{H1} and~\eqref{H2} in~\Cref{thm:perron}.
Choose~$R>0$ large enough so that~$\Omega_1 \subset B_{R/2}$ and define
\[
\overline{u}(x)
\coloneqq
\frac{\|f\|_{L^\infty(\Omega)}+1}{2N}(R^2-|x|^2)_+
+ \|g\|_{\infty},
\quad \textup{and} \quad 
\underline{u}(x)\coloneqq -\overline{u}(x).
\]
It is standard to check that~$\overline{u},\underline{u} \in C^4_b(\Omega_1)$, and that
\[
\begin{cases}
-\Delta \overline{u}(x)=\|f\|_{L^\infty(\Omega)}+1, & x \in \Omega, \\
\overline{u}(x) \geq \|g\|_{\infty}, & x \in \RR^N \setminus \Omega,
\end{cases}
\quad \textup{and} \quad 
\begin{cases}
-\Delta \underline{u}(x)=-\|f\|_{L^\infty(\Omega)}-1, & x \in \Omega, \\
\underline{u}(x) \leq -\|g\|_{\infty}, & x \in \RR^N \setminus \Omega.
\end{cases}
\]
By the consistency estimate~\eqref{eq:consitencyLap}, for~$\rho>0$ small enough we obtain
\[
\begin{cases}
\mathcal{A}_{\rho}(x,\overline{u},\overline{u}(x)) \geq \tfrac12, & x \in \Omega, \\
\overline{u}(x) \geq g(x), & x \in \RR^N \setminus \Omega,
\end{cases}
\quad \textup{and} \quad 
\begin{cases}
\mathcal{A}_{\rho}(x,\underline{u},\underline{u}(x)) \leq -\tfrac12, & x \in \Omega, \\
\underline{u}(x) \leq g(x), & x \in \RR^N \setminus \Omega.
\end{cases}
\]
Thus,~$\overline{u}$ (resp.\@~$\underline{u}$) is a classical strict supersolution (resp.\@ subsolution) of the DPP.
In particular, it is a viscosity supersolution (resp.\@ subsolution), and hence hypothesis
\eqref{H1e} (resp.\@~\eqref{H1}) holds.
Moreover, by the comparison result in~\Cref{thm:striccomparison}
(it is straightforward to check that~$\mathcal{X}$ and~$\mathcal{A}_\rho$ satisfy
assumptions~\eqref{visesta-item1}–\eqref{visesta-item2}–\eqref{visesta-item3}),
any viscosity subsolution (resp.\@ supersolution)~$v$ satisfies
$v\leq \overline{u}$ (resp.\@~$v\geq \underline{u}$),
which implies~\eqref{H2} (resp.\@~\eqref{H2e}).

Once existence of viscosity solutions is established, convergence of viscosity solutions of the DPP to the solution of~\eqref{eq:NHLaplaceProb} follows readily.
Indeed, the barriers~$\overline{u}$ and~$\underline{u}$ are bounded independently of~$\rho$, and therefore any viscosity solution of the DPP is uniformly bounded.
This yields the weak stability required in~\Cref{teo.convergencia.intro.bis}.
Furthermore, property~\eqref{eq:consitencyLap} implies the consistency conditions~\eqref{cons1}–\eqref{cons2} required in~\Cref{teo.convergencia.intro.bis}
(for instance with~$\omega(\rho)=\rho^2$)
and~\Cref{cor:convSchm}, ensuring convergence of the DPP under the hypothesis
of the strong uniqueness property
(assuming sufficient regularity of~$f$,~$g$ and~$\Omega$, cf.~\cite{BarlesSouganidis}).

Finally,~\Cref{teo.asympt.intro} also follows, since
\eqref{eq:consitencyLap} implies the consistency assumption therein,
which yields the equivalence between viscosity solutions of the PDE
and the asymptotic mean value expansion.

 \subsection{The Heat Equation.} Our starting point is the parabolic boundary value problem
\begin{equation}\label{eq:NHHeat}
\begin{cases}
\partial_t u - \Delta u - f(x,t) = 0, & (x,t) \in \Omega\times(0,T), \\
u(x,t) = g(x,t), & (x,t) \in (\RR^N \setminus \Omega)\times(0,T),\\
u(x,0) = u_0(x), & x \in  \Omega.
\end{cases}
\end{equation}
By a Taylor expansion, it follows that for~$\phi\in C^4_b(\RR^N\times(0,\infty))$ we have the following consistent approximation of the heat operator:
\begin{equation}\label{eq:consStrongHeatEq}
\partial_t \phi(x,t)-\Delta \phi(x,t)
= \frac{2(N+2)}{\rho}
\left(
\phi(x,t)
- \dashint_{t-\frac{\rho}{N+2}}^t
\dashint_{B_{\sqrt{\rho}}(x)} \phi(y,s)\, dy\, ds
\right)
- f(x,t) + \Lambda(x,t,\rho).
\end{equation}
Moreover,
\begin{equation}\label{eq:consHeat}
\sup_{(x,t)\in \Omega \times [0,T]}|\Lambda(x,t,\rho)|
\leq c_{N}
\left(
\|D^4 \phi\|_{L^\infty\left(\Omega_1\times[0,T+1]\right)}
+ \|\partial_{tt} \phi\|_{L^\infty\left(\Omega_1\times[0,T+1]\right)}
\right)\rho,
\end{equation}
where~$\Omega_1 \coloneqq \{x\in \RR^N : \textup{dist}(x,\Omega)<1\}$.
In the notation of this paper, this corresponds to
\[
\mathcal{A}_{\rho}(x,t,\phi,\phi(x,t))
= \frac{2(N+2)}{\rho}
\left(
\phi(x,t)
- \dashint_{t-\frac{\rho}{N+2}}^t
\dashint_{B_{\sqrt{\rho}}(x)} \phi(y,s)\, dy\, ds
\right)
- f(x,t),
\]
and
\[
\mathfrak{a}_\rho(x,t,\phi)
= \dashint_{t-\frac{\rho}{N+2}}^t
\dashint_{B_{\sqrt{\rho}}(x)} \phi(y,s)\, dy\, ds
+ \frac{\rho}{2(N+2)}f(x,t).
\]

Let us now observe that, given a smooth and bounded nonnegative function~$\psi$ such that
\[
\partial_t \psi - \Delta \psi
= \|f\|_{L^\infty(\Omega\times(0,T))}+1
\quad \textup{for all} \quad (x,t) \in B_R\times(0,T+1),
\] the function~$\overline{u}
\coloneqq
\psi
+ \|g\|_{L^\infty(\Omega\times(0,T))}
+ \|u_0\|_{L^\infty(\Omega)}
$
satisfies
\begin{equation}
\begin{cases}
\partial_t \overline{u} - \Delta \overline{u}
= \|f\|_{L^\infty(\Omega\times(0,T))}+1,
& (x,t) \in \Omega\times(0,T), \\
\overline{u}(x,t)
\geq \|g\|_{L^\infty(\Omega\times(0,T))},
& (x,t) \in (\RR^N \setminus \Omega)\times[0,T),\\
\overline{u}(x,0)
\geq \|u_0\|_{L^\infty(\Omega)},
& x \in \Omega,
\end{cases}
\end{equation}
provided that~$\Omega_1 \subset B_{R/2}$.
Moreover,~$\overline{u}$ is smooth enough in~$\Omega_1\times[0,T+1]$ so that
the consistency estimate~\eqref{eq:consHeat} applies.

From this point on, conclusions analogous to those obtained in the elliptic case
can be carried out for the parabolic setting.

\subsection{$p$-harmonic functions.}
Our starting point is the equation, for~$p>1$,
\begin{equation}\label{eq:pLaplaceProb}
\begin{cases}
-\Delta_p u(x) = 0, & x \in \Omega, \\
u(x) = g(x), & x \in \RR^N \setminus \Omega.
\end{cases}
\end{equation}
In this case, we first need to point out the equivalence between viscosity solutions of the above problem and those of
\[
\begin{cases}
-\Delta_p^{\textup{N}} u(x) = 0, & x \in \Omega, \\
u(x) = g(x), & x \in \RR^N \setminus \Omega,
\end{cases}
\]
where
\[
\Delta_p^{\textup{N}} u \coloneqq |\nabla u|^{2-p}\Delta_p u
= \Delta u + (p-2)\Bigl\langle D^2 u \, \frac{\nabla u}{|\nabla u|}, \frac{\nabla u}{|\nabla u|} \Bigr\rangle.
\]

A consistent approximation was found in~\cite{AsymptoticMeanValuePharmonic} and is given, for all~$\phi \in C_b^4(\RR^N)$ such that
$\nabla \phi \neq 0$ in~$\Omega_1$, by
\[
\Delta_p^{\textup{N}} \phi(x)
= \underbrace{\frac{1}{\rho}\left(\phi(x)-\mathfrak{a}_\rho(x,\phi)\right)}_{\mathcal{A}_\rho(x,\phi,\phi(x))}
+ \Lambda(x,\rho),
\]
where
\[
\mathfrak{a}_\rho(x,\phi)
\coloneqq
\frac{p-2}{p+N}
\left(
\frac{1}{2}\sup_{B_{\gamma \sqrt{\rho}}(x)} \phi
+ \frac{1}{2}\inf_{B_{\gamma \sqrt{\rho}}(x)} \phi
\right)
+ \frac{N+2}{N+p}\dashint_{B_{\gamma \sqrt{\rho}}(x)} \phi(y)\,dy,
\]
and
\begin{equation}\label{eq:consitencypLap}
\sup_{x\in \Omega} |\Lambda(x,\rho)|
\leq c_{N,p}
\frac{\|\phi\|_{C^4(\Omega_1)}}{\inf_{x\in \Omega_1}|\nabla \phi(x)|}\,\rho^{1/2}.
\end{equation}

Note that in this case structural property~\textup{\ref{asA-it:a}} only holds for~$p \geq 2$, but analogous ideas can be found in the literature for~$p \in (1,2)$.

\Cref{rem:restrictedTest} is particularly important in this setting: although consistency only holds for test functions with nonvanishing gradient, in the viscosity framework one only needs to test against such functions (see~\cite{AsymptoticMeanValuePharmonic}).

To construct the barriers, let~$x_0 \notin \Omega_1$ and ~$R>0$ be so that
\[
\Omega_1 \subset B_R(x_0)
\quad \textup{and} \quad
\Omega_1 \cap B_{R/4}(x_0) = \emptyset.
\]
We then consider a function~$\overline{u}$ satisfying
\begin{equation}
\begin{cases}
-\Delta_p^N \overline{u}(x) \geq 1, & x \in B_R(x_0), \\
\overline{u}(x) \geq g(x), & x \in \RR^N\setminus \Omega.
\end{cases}
\end{equation}
An explicit example, given for instance in~\cite{delTesoLindgrenMeanValue}, is
\[
\overline{u}(x)=C_1|x_0-x|^{\frac{p}{p-1}}+C_2,
\]
for appropriate choices of the constants~$C_1$ and~$C_2$, depending on~$R$,~$\Omega$,~$p$, and~$N$.
Note that~$\overline{u}$ is smooth, has nonvanishing gradient in~$\Omega_1 \subset \RR^N \setminus B_{R/4}(x_0)$, and is bounded in~$\Omega_1$.

From here, consequences analogous to those obtained in the harmonic case can be derived. Let us finally observe that the strong uniqueness property was proved in~\cite{DelTeso-Manfredi.Parviainen}.

\subsection{The~$p$-Laplacian.}
The idea used in the previous example only works for the homogeneous problem. Let us consider the non-homogeneous problem
\begin{equation}\label{eq:pLaplaceProb2}
\begin{cases}
-\Delta_p u(x) - f(x)=0, & x \in \Omega, \\
u(x) = g(x), & x \in \RR^N \setminus \Omega.
\end{cases}
\end{equation}
It was shown in~\cite{delTesoLindgrenMeanValue, delTesoMedinaOchoa2024} that the following consistent approximation holds for all
$\phi \in C^4(B_R(x_0))$:
\begin{equation}
-\Delta_p \phi(x)-f(x)
= \mathcal{A}_\rho(x,\phi,\phi(x)) + \Lambda(x,\rho),
\end{equation}
where
\begin{equation}
\mathcal{A}_\rho(x,\phi,\phi(x))
= \frac{C}{\rho}
\dashint_{B_{\rho^{\frac{1}{p}}}}
|\phi(x)-\phi(x+y)|^{p-2}
\bigl(\phi(x)-\phi(x+y)\bigr)\, dy-f(x)
\end{equation}
and
\begin{equation}\label{eq:consitencypLap2}
\sup_{x\in \Omega} |\Lambda(x,\rho)|
\leq c_{N,p} \|\phi\|_{C^4(\Omega_1)} \rho^{\frac{\alpha}{p}},
\end{equation}
with~$\alpha=2$ if~$p\geq4$ and~$\alpha=p-2$ if~$p\in(2,4)$, and where
$\Omega_1 \coloneqq \{x\in \RR^N : \textup{dist}(x,\Omega)<1\}$.

Using the same barrier functions as in the previous section, one can obtain results analogous to those derived in the homogeneous case. Let us finally observe that the strong uniqueness property was proved in~\cite{delTesoLindgrenMeanValue}.

Note that, in this setting, there is no explicit formulation for the associated averaging operator~$\mathfrak{a}_\rho$.

\subsection{The fractional Laplacian.} Our starting point is the equation
\begin{equation}
\begin{cases}
(-\Delta)^s u(x) -f(x)=0, & x \in \Omega, \\
u(x) = g(x), & x \in \RR^N \setminus \Omega,
\end{cases}
\end{equation}
where~$s \in (0,1)$ and
\[
(-\Delta)^s \phi(x)
= c_{N,s}\,\textup{P.V.} \int_{\RR^N} \bigl(\phi(x)-\phi(x+z)\bigr)\frac{dz}{|z|^{N+2s}},
\]
with~$c_{N,s}$ a positive normalization constant.
It is standard to check that for~$\phi \in C_b^2(\RR^N)$ the following consistency relation holds:
\[
(-\Delta)^s \phi(x)-f(x)
= c_{N,s} \int_{|z|>\varepsilon} \bigl(\phi(x)-\phi(x+z)\bigr)\frac{dz}{|z|^{N+2s}}-f(x)
+ \Lambda(x,\varepsilon),
\]
with
\begin{equation}\label{eq:consfracLap}
\sup_{x\in \Omega} |\Lambda(x,\varepsilon)|
\leq k_{N,s}\|D^2 \phi\|_{L^\infty(\Omega_1)}
\varepsilon^{2-2s},
\end{equation}
where~$\Omega_1 \coloneqq \{x\in \RR^N : \textup{dist}(x,\Omega)<1\}$.

Introducing the measure~$d\mu_s(z)=|z|^{-N-2s}\,dz$ and setting~$\rho=\varepsilon^{2s}$, we obtain, in the notation of this paper,
\[
\mathcal{A}_\rho(x,\phi,\phi(x))
= \frac{|\partial B_1|\,c_{N,s}}{2s} \frac{1}{\rho}
\left(\phi(x)-\dashint_{|z|>\rho^{\frac{1}{2s}}} \phi(x+z)\,d\mu_s(z)\right) - f(x),
\]
and
\[
\mathfrak{a}_\rho(x,\phi)
= \dashint_{|z|>\rho^{\frac{1}{2s}}} \phi(x+z)\,d\mu_s(z)+f(x).
\]

As usual, we construct a barrier for the DPP by choosing~$R>0$ large enough so that
$\Omega_1 \subset B_{R/2}$ and defining
\begin{equation}\label{eq:FracLapequal1}
\overline{u}(x)
\coloneqq
C_{N,s}\bigl(\|f\|_{L^\infty(\Omega)}+1\bigr)(R^2-|x|^2)_+^{\,s}
+ \|g\|_{L^\infty(\RR^N)},
\quad \textup{with} \quad
C_{N,s}
= \frac{2^{-2s}\Gamma\!\left(\frac{N}{2}\right)}
{\Gamma\!\left(\frac{N+2s}{2}\right)\Gamma(1+s)}.
\end{equation}
This function satisfies
\[
\begin{cases}
(-\Delta)^s \overline{u}(x)=\|f\|_{L^\infty(\Omega)}+1, & x \in \Omega, \\
\overline{u}(x) \geq \|g\|_{L^\infty(\RR^N)}, & x \in \RR^N \setminus \Omega.
\end{cases}
\]

From here, using the consistency estimate~\eqref{eq:consfracLap}, conclusions analogous to those obtained in the harmonic case can be derived.
\subsection{The infinity fractional Laplacian.} \label{ej:inflapl}
We develop this example in a more thoughtful way, since the existence result for solutions of the DPP is new and was previously unknown even in the classical sense due to measurability issues.

Our starting point is the equation
\begin{equation}\label{eq:pfracLapinf}
\begin{cases}
-\Delta^s_\infty u(x) - f(x) = 0, & x \in \Omega, \\
u(x) = g(x), & x \in \RR^N \setminus \Omega,
\end{cases}
\end{equation}
where the infinity fractional Laplacian~$\Delta^s_\infty$, introduced in~\cite{[BCFinfinity]}, is defined for~$s \in (1/2,1)$ as
\[
-\Delta^s_\infty \varphi(x)
:= C_s \sup_{|y|=1} \inf_{|\tilde y|=1}
\int_0^{\infty}
\left(2\varphi(x)-\varphi(x+\eta y)-\varphi(x-\eta \tilde y)\right)
\frac{d\eta}{\eta^{1+2s}},
\]
for a positive constant~$C_s$ that coincides with the one-dimensional constant of the fractional Laplacian.

The following consistent approximation of this operator was obtained in~\cite{delTesoEndalLewicka}
\[
-\Delta_\infty^s \phi(x)-f(x)
= \mathcal{A}_\rho(x,\phi,\phi(x)) + \Lambda(x,\rho),
\]
with
\[
\mathcal{A}_\rho(x,\phi,\phi(x))
= \frac{C_s}{s}\frac{1}{\rho}
\left(
\phi(x)
-\frac{1}{2}
\sup_{|y|=1}
\dashint_{\rho^{\frac{1}{2s}}}^{\infty}
\phi(x+\eta y)\,\frac{d\eta}{\eta^{1+2s}}
-\frac{1}{2}
\inf_{|y|=1}
\dashint_{\rho^{\frac{1}{2s}}}^{\infty}
\phi(x+\eta y)\,\frac{d\eta}{\eta^{1+2s}}
\right)
-f(x),
\]
and
\begin{equation}
\sup_{x\in \Omega} |\Lambda(x,\rho)|
\leq c_{N,s}
\frac{\|\phi\|_{C^2(\Omega_1)}}{\inf_{x\in \Omega_1}|\nabla \phi(x)|}\,
\rho^{\alpha},
\end{equation}
for some~$\alpha>0$ depending on~$N$ and~$s$.
Note that, as in the normalized~$p$-Laplacian case, consistency only holds when the gradient of~$\phi$ does not vanish.

In order to prove existence of classical solutions of the DPP
\begin{equation}\label{eq:pfracLapinfDPP}
\begin{cases}
\mathcal{A}_\rho(x,u_\rho,u_\rho(x)) = 0, & x \in \Omega, \\
u_\rho(x) = g(x), & x \in \RR^N \setminus \Omega,
\end{cases}
\end{equation}
a major measurability issue arises. The function~$u_\rho$ must be measurable along the one-dimensional rays
$x+\eta y$ with~$\eta \in (\rho^{\frac{1}{2s}},\infty)$, for all~$x \in \Omega$ and all~$y \in \partial B_1$.
This is, in general, difficult to verify and was not established in~\cite{delTesoEndalLewicka}.
However, within the framework developed in this paper, we are able to prove existence of viscosity solutions of the DPP.

To this end, we verify that hypotheses~\eqref{H1} and~\eqref{H2} hold.
We construct classical strict super- and subsolutions of the DPP.
Choose~$R>0$ large enough so that~$\Omega_1 \subset B_{R/2}$ and define
\[
\overline{u}(x)
\coloneqq
k_{s}\bigl(\|f\|_{L^\infty(\Omega)}+1\bigr)(R^2-|x|^2)_+^{\,s}
+ \|g\|_{L^\infty(\RR^N)},
\quad \textup{with} \quad
k_{s}
= \frac{2^{-2s}\Gamma\!\left(\frac{1}{2}\right)}
{\Gamma\!\left(\frac{1+2s}{2}\right)\Gamma(1+s)}.
\]
This is a radial function whose one-dimensional profile
$\overline{U}:\RR\to \RR_+$ coincides with~\eqref{eq:FracLapequal1}, namely
\[
\overline{U}(r)
= k_{s}\bigl(\|f\|_{L^\infty(\Omega)}+1\bigr)(R^2-r^2)_+^{\,s}
+ \|g\|_{L^\infty(\RR^N)},
\]
and~$\overline{u}(x)=\overline{U}(|x|)$ for all~$x\in \RR^N$.
Since~$\overline{u}$ is radial and radially nonincreasing, Proposition~6.1 in~\cite{delTesoEndalJakobsenVazquez} yields
\[
-\Delta_\infty^s \overline{u}(x)
= (-\partial_{rr}^2)^s \overline{U}(|x|),
\]
where~$(-\partial_{rr}^2)^s$ denotes the one-dimensional fractional Laplacian.
Thus,
\[
\begin{cases}
-\Delta^s_\infty \overline{u}(x)
= \|f\|_{L^\infty(\Omega)}+1, & x \in \Omega, \\
\overline{u}(x)
\geq \|g\|_{L^\infty(\RR^N)}, & x \in \RR^N \setminus \Omega.
\end{cases}
\]

By a similar radial argument, we compute
\begin{align}
\mathcal{A}_\rho(x,\overline{u},\overline{u}(x))
&=
\frac{C_s}{s}\frac{1}{\rho}
\left(
\overline{U}(r)
- \frac 12 \dashint_{\rho^{\frac{1}{2s}}}^{\infty}
\overline{U}(r+\eta)\,\frac{d\eta}{\eta^{1+2s}}- \frac 12 \dashint_{\rho^{\frac{1}{2s}}}^{\infty}
\overline{U}(r-\eta)\,\frac{d\eta}{\eta^{1+2s}}
\right)
-f(x) \\
&\eqqcolon
\widetilde{\mathcal{A}}_\rho(r,\overline{U},\overline{U}(r)) - f(x),
\end{align}
with~$r=|x|$.
Proceeding as in the previous section, we obtain
 \begin{align}
 \sup_{x\in \Omega} |-\Delta_\infty^s \overline{u}(x)-f(x)-\mathcal{A}_\rho(x,\overline{u},\overline{u}(x))|&= \sup_{x\in \Omega} |(-\partial_{rr}^2)^s \overline{U}(|x|)-\widetilde{\mathcal{A}}_\rho(|x|,\overline{U},\overline{U}(|x|))|\\
 &\leq \tilde{c}_{s}\|D^2 \overline{u}\|_{L^\infty(\Omega_1)}
 \rho^{\frac{2-2s}{2s}}.
 \end{align}
Choosing~$\rho>0$ sufficiently small, we conclude that
\[
\begin{cases}
\mathcal{A}_\rho(x,\overline{u},\overline{u}(x)) \geq \frac{1}{2},
& x \in \Omega, \\
\overline{u}(x) \geq g(x),
& x \in \RR^N \setminus \Omega.
\end{cases}
\]
Hence, by~\Cref{thm:striccomparison}, hypothesis~\eqref{H2} holds.
Moreover,~\eqref{H1} follows since
$\underline{u}\coloneqq -\overline{u}$
is a classical (and therefore viscosity) subsolution of the DPP.

Finally, we remark that a strong uniqueness property, or even a comparison principle for viscosity solutions, is unknown for this PDE problem, see~\cite{[BCFinfinity]}.
As a consequence, full convergence of the DPP cannot be achieved.
Nevertheless, the uniform barriers constructed above provide the weak stability required for
\Cref{teo.convergencia.intro.bis}, ensuring the existence of viscosity subsolutions and supersolutions. As in the~$p$-harmonic case, the notion of viscosity solution must be restricted to test functions with nonvanishing gradient, since the consistency result proved in~\cite{delTesoEndalLewicka} only holds in this class. Within this framework, the equivalence between viscosity solutions of the asymptotic expansion and of the PDE also follows from~\Cref{thm:weakvisctivisc}.

\section*{Acknowledgements}

This research started during a visit of \textsc{F. del Teso} and \textsc{J. Ruiz-Cases} to Universidad de Buenos Aires, and was finished during a visit of \textsc{J.\,D.\,Rossi} to Universidad Autónoma de Madrid. The authors want to thank both institutions for the hospitality and the stimulating working atmosphere. \textsc{F. del Teso} is funded by the Ramón y Cajal contract reference RYC2020-029589-I and the
research projects PID2021-127105NB-I00, CNS2024-154515 of the AEI, Government
of Spain. \textsc{J.\,D.\,Rossi} was also supported by CONICET grants PIP GI No 11220150100036CO, PICT-2018-03183 and UBACyT grant 20020160100155BA, Argentina. \textsc{J. Ruiz-Cases} was supported by grants PID2020-116949GB-I00, PID2023-146931NB-I00, RED2022-134784-T, RED2024-153842-T and CEX2023-001347-S, all of them funded by MICIU/AEI.

\bibliographystyle{acm}

\end{document}